\def\namedlabel#1#2{\begingroup
	#2%
	\def\@currentlabel{#2}%
	\phantomsection\label{#1}\endgroup
}
\providecommand{\keywords}[1] 
{
	\noindent \small	
	\textbf{\textit{Keywords---}} #1
}
\definecolor{red}{RGB}{255,0,0}
\definecolor{blue}{rgb}{0.0, 0.4, 0.65}
\definecolor{orange}{RGB}{253,188,64}
\definecolor{green}{RGB}{154,205,50}
\definecolor{orangeplot}{RGB}{245,138,42}
\definecolor{greenplot}{RGB}{0,200,0}
\theoremstyle{plain}
\newtheorem{theorem}{Theorem}
\newtheorem{proposition}{Proposition}
\newtheorem{corollary}{Corollary}
\theoremstyle{remark}
\newtheorem{remark}{Remark}
\newcommand\A{{\mathcal{A}}}
\newcommand\B{{\mathcal{B}}}
\newcommand\I{{\mathcal{I}}}
\newcommand\K{{\mathcal{K}}}
\newcommand\Se{{\mathcal{S}}}
\newcommand\ind{{\mathbb{I}}}
\newcommand\inde{{\ind_0}}
\tikzset{
	item/.style={minimum size=.5cm},
	customer/.style={item, fill=green!50},
	server/.style={item, fill=orange!50},
	fcfs/.style={
		draw,
		rectangle split,
		rectangle split parts=#1,
		rectangle split horizontal,
		rectangle split empty part width=.36cm,
		rectangle split empty part height=.51cm,
		inner xsep=0cm,
		inner ysep=0cm,
	},
}
\pgfplotsset{compat=1.8}
\pgfplotsset{
	table/col sep = {comma},
	table/search path = {./, Numerical-results/},
	defaultplot/.style={
		xlabel near ticks, ylabel near ticks,
		xtick style={draw=none}, ytick style={draw=none},
		every axis plot/.append style={
			thick,
		},
	},
}
\pgfplotsset{
	loadplotstyle/.style={defaultplot,
		xmin=0, ymin=0,
		grid=major,
		legend pos=north west,
		legend style={
			cells={anchor=west, align=left},
			at={(0, 1.25)},
			/tikz/every even column/.append style={column sep=0.2cm},
		},
		legend columns=6,
		legend cell align={left},
		width=.5\linewidth, height=.3\linewidth,
	},
	transitionplotstyle/.style={loadplotstyle,
		ylabel={Transition probability},
		xmin=0, xmax=1,
		ymin=0, ymax=0.3,
		legend style={
			cells={anchor=west, align=left},
			at={(1.1, 0.7)},
			/tikz/every even column/.append style={column sep=0.2cm},
		},
		legend columns=3,
	},
	probaplotstyle/.style={loadplotstyle,
		ylabel={Waiting probability},
		xmin=0, xmax=1,
		ymin=0, ymax=1,
	},
	meanplotstyle/.style={loadplotstyle,
		ylabel={Mean waiting time},
		xmin=0, xmax=1,
		ymin=0, ymax=20,
	},
}
\begin{document}
	\title{Performance Evaluation of Stochastic~Bipartite~Matching~Models\footnote{The final authenticated version is
	available online at
	\url{https://doi.org/10.1007/978-3-030-91825-5_26}.}}
	
	\author[1]{C\'eline Comte\footnote{Corresponding author.}}
	\author[2]{Jan-Pieter Dorsman}
	
	\affil[1]{Eindhoven University of Technology, The Netherlands,
		\href{mailto:c.m.comte@tue.nl}{c.m.comte@tue.nl}}
	\affil[2]{University of Amsterdam, The Netherlands,
		\href{mailto:j.l.dorsman@uva.nl}{j.l.dorsman@uva.nl}}
	\date{\vspace{-1cm}}
	
	\maketitle
	\begin{abstract}
		We consider a stochastic bipartite matching model consisting of multi-class customers and multi-class servers. Compatibility constraints between the customer and server classes are described by a bipartite graph. Each time slot, exactly one customer and one server arrive. The incoming customer (resp.\ server) is matched with the earliest arrived server (resp.\ customer) with a class that is compatible with its own class, if there is any, in which case the matched customer-server couple immediately leaves the system; otherwise, the incoming customer (resp.\ server) waits in the system until it is matched. Contrary to classical queueing models, both customers and servers may have to wait, so that their roles are interchangeable. While (the process underlying) this model was already known to have a product-form stationary distribution, this paper derives a new compact and manageable expression for the normalization constant of this distribution, as well as for the waiting probability and mean waiting time of customers and servers. We also provide a numerical example and make some important observations.
		
		\keywords{bipartite matching models, order-independent queues, performance analysis, product-form stationary distribution}
	\end{abstract}

	\section{Introduction}
	
	Stochastic matching models typically consist of items of multiple classes that arrive at random instants to be matched with items of other classes. In the same spirit as classical (static) matching models,
	stochastic models encode compatibility constraints between items using a graph on the classes.
	This allows for the modeling of many matching applications that are stochastic in nature, such as organ transplants where not every patient is compatible with every donor organ.
	
	In the literature on stochastic matching, a rough distinction is made between \emph{bipartite} and \emph{non-bipartite} models. In a bipartite matching model, the graph that describes compatibility relations between item classes is bipartite. In this way, item classes can be divided into two groups called \emph{customers} and \emph{servers}, so that customers (resp.\ servers) cannot be matched with one another. This is the variant that we consider in this paper. It is discrete-time in nature and assumes that, every time unit, exactly one customer and one server arrive. The classes of incoming customers and servers are drawn independently from each other, and they are also independent and identically distributed across time units. Following~\cite{BGM13,CKW09}, we adopt the common first-come-first-matched policy, whereby an arriving customer (resp.\ server) is matched with the earliest arriving compatible server (resp.\ customer). A toy example is shown in \figurename~\ref{fig:toy}. This model is equivalent to the \textit{first-come-first-served infinite bipartite matching model}, studied in \cite{ABMW17,AKRW18,CKW09}, which can be used to describe the evolution of waiting lists in public-housing programs and adoption agencies for instance~\cite{CKW09}.
	
	\begin{figure}[t]
		\centering
		\subfloat[Compatibility graph
		\label{fig:graph}]{%
			\begin{tikzpicture}
				\def\width{.9cm}
				\def\height{.965cm}
				
				\node[customer] (1) {1};
				\node[customer] (2)
				at ($(1)+(\width,0)$) {2};
				\node[customer] (3)
				at ($(2)+(\width,0)$) {3};
				\node[customer] (4)
				at ($(3)+(\width,0)$) {4};
				
				\node[server] (A)
				at ($(1)+(-.5*\width,-\height)$) {A};
				\node[server] (B)
				at ($(1)!.5!(2)+(0,-\height)$) {B};
				\node[server] (C)
				at ($(2)!.5!(3)+(0,-\height)$) {C};
				\node[server] (D)
				at ($(3)+(.5*\width,-\height)$) {D};
				\node[server] (E)
				at ($(4)+(.5*\width,-\height)$) {E};
				
				\draw[-] (A) -- (1) -- (B) -- (2) -- (C) -- (3) -- (D) -- (4) -- (E);
				
				\node[xshift=-1.7cm, anchor=west]
				at (1 -| A) {Customers};
				\node[xshift=-1.7cm, anchor=west]
				at (A) {Servers};
				\phantom{
					\node[anchor=north] at (A.south) {\strut};
				}
			\end{tikzpicture}
		}
		\qquad
		\subfloat[First-come-first-matched policy
		\label{fig:state}]{%
			\begin{tikzpicture}
				\def\height{.7cm}
				
				\node[fcfs=6,
				rectangle split part fill
				= {green!50, green!50, green!50,
					white, white, white},
				] (customers) {};
				\fill[white] ([xshift=\pgflinewidth+1pt,
				yshift=-\pgflinewidth+1pt]customers.north east)
				rectangle ([xshift=-\pgflinewidth-4pt,
				yshift=\pgflinewidth-1pt]customers.south east);
				\fill[white] ([xshift=\pgflinewidth+1pt,
				yshift=-\pgflinewidth-.01pt]customers.north east)
				rectangle ([xshift=-\pgflinewidth-5pt,
				yshift=\pgflinewidth+.01pt]customers.south east);
				\node[anchor=south]
				at ($(customers.north)$)
				{\strut State $c = (1,2,1)$};
				
				\node at
				($(customers.one north)!.5!(customers.one south)$) {1};
				\node at
				($(customers.two north)!.5!(customers.two south)$) {2};
				\node at
				($(customers.three north)!.5!(customers.three south)$) {1};
				
				\node[fcfs=6,
				rectangle split part fill
				= {orange!50, orange!50, orange!50,
					white, white, white},
				] (servers) at ($(customers.south)-(0,\height)$) {};
				\fill[white] ([xshift=\pgflinewidth+1pt,
				yshift=-\pgflinewidth+1pt]servers.north east)
				rectangle ([xshift=-\pgflinewidth-4pt,
				yshift=\pgflinewidth-1pt]servers.south east);
				\fill[white] ([xshift=\pgflinewidth+1pt,
				yshift=-\pgflinewidth-.01pt]servers.north east)
				rectangle ([xshift=-\pgflinewidth-5pt,
				yshift=\pgflinewidth+.01pt]servers.south east);
				\node[anchor=north]
				at ($(servers.south)$)
				{\strut State $d = (D,D,D)$};
				
				\node at
				($(servers.one north)!.5!(servers.one south)$) {D};
				\node at
				($(servers.two north)!.5!(servers.two south)$) {D};
				\node at
				($(servers.three north)!.5!(servers.three south)$) {D};
				
				\node[customer] (newcustomer)
				at ($(customers.west)-(1.4cm,0)$) {2};
				\node[server] (newserver)
				at ($(servers.west)-(1.4cm,0)$) {C};
				
				\node[anchor=south]
				at (newcustomer.north) {\strut New customer};
				\node[anchor=north]
				at (newserver.south) {\strut New server};
				
				\draw[->] ($(newcustomer.west)-(.6cm,0)$)
				-- (newcustomer);
				\draw[->] ($(newserver.west)-(.6cm,0)$)
				-- (newserver);
				
				\draw[->]
				($(newserver.east)+(.05cm,0)$)
				-- ($(customers.two south)-(.1cm,.05cm)$);
				
				\draw[-] ($(customers.two)+(.5cm,.5cm)$)
				-- ($(customers.two)$);
				\draw[-] ($(customers.two)+(.5cm,0)$)
				-- ($(customers.two)+(0,.5cm)$);
			\end{tikzpicture}
		}
		\caption{%
			A stochastic bipartite matching model
			with a set $\I = \{1, 2, 3, 4\}$
			of customer classes
			and a set $\K = \{A, B, C, D, E\}$
			of server classes.
		}
		\label{fig:toy}
	\end{figure}
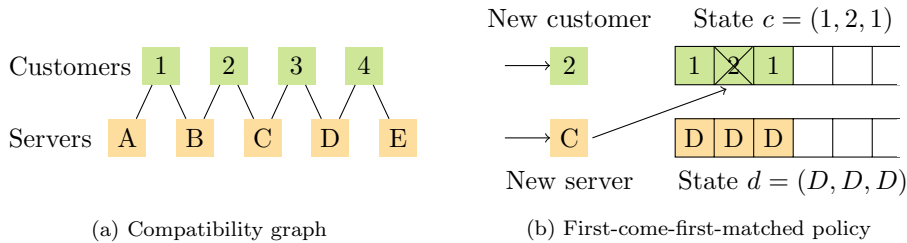
	
	In contrast, in a stochastic \emph{non}-bipartite matching model, item classes cannot be divided into two groups because the compatibility graph is non-bipartite. Another notable difference is that only one item arrives at each time slot, the classes of successive items being drawn independently from the same distribution. While \cite{MM16} derived stability conditions for this non-bipartite model, \cite{MBM21} showed that the stationary distribution of the process underlying this model has a product form. The recent work~\cite{C21} built on the latter result to derive closed-form expressions for several performance metrics, by also exploiting a connection with order-independent (loss) queues \cite{BK96,K11}. The present work seeks to provide a similar analysis for the above-mentioned bipartite model. 
	
	More specifically, we derive closed-form expressions for several performance metrics in the stochastic bipartite model studied in \cite{ABMW17}. While earlier studies~\cite{AW12,CKW09} were skeptical about the tractability of the stationary distribution corresponding to this variant, \cite{ABMW17} showed that this stationary distribution in fact possesses the product-form property, thus paving the way for an analysis similar to that of~\cite{C21}. That is, we use techniques from order-independent (loss) queues~\cite{BK96,K11} and other product-form models with compatibility constraints (cf.\ \cite{GR20} for a recent overview) to analyze the stochastic bipartite matching model. 
	
	The rest of this paper is structured as follows. In Section \ref{sec:model}, we introduce the model and cast it in terms of a framework commonly used for analysis of product-form models. We then provide a performance evaluation of this model. More specifically, we first derive an alternative closed-form expression for the normalization constant of the stationary distribution.
	While the computational complexity of this expression
	is prohibitive for instances with many classes
	(as was the case for the expression derived in~\cite{ABMW17}),
	it draws the relation with product-form queues and paves the way for heavy-traffic analysis. Furthermore, it allows us to directly derive recursive expressions for several other performance metrics, such as the probability that incoming customers or servers have to wait and the mean number of customers and servers that are waiting. To the best of the authors' knowledge, this paper is the first to provide expressions for these performance metrics. This analysis is presented in Section \ref{sec:performance}. Finally, Section~\ref{sec:numerical} numerically studies a model instance and makes some important observations.

	\section{Model and preliminary results} \label{sec:model}
	
	In Section~\ref{subsec:model}, we describe
	a stochastic bipartite matching model
	in which items of two groups,
	called customers and servers,
	arrive randomly
	and are matched with one another.
	As mentioned earlier, this model is analogous
	to that introduced in~\cite{CKW09}
	and further studied in~\cite{ABMW17,AKRW18,AW12,BGM13}.
	Section~\ref{subsec:dtmc}
	focuses on a discrete-time Markov chain
	that describes the evolution of this model.
	Finally, Section~\ref{subsec:steady} recalls
	several results that are useful for the analysis
		of Section~\ref{sec:performance}.
	
	\subsection{Model and notation} \label{subsec:model}
	
	\paragraph{Bipartite compatibility graph}
	
	Consider a finite set $\I$
	of $I$ customer classes
	and a finite set $\K$
	of $K$ server classes.
	Also consider
	a connected bipartite graph
	on the sets $\I$ and $\K$.
	For each $i \in \I$ and $k \in \K$,
	we write $i \sim k$
	if there is an edge between
	nodes~$i$ and~$k$ in this graph,
	and $i \nsim k$ otherwise.
	This bipartite graph is
	called the \emph{compatibility} graph of the model.
	It describes the compatibility relations
	between customers and servers
	in the sense that,
	for each $i \in \I$ and $k \in \K$,
	a class-$i$ customer
	and a class-$k$ server
	can be matched with one another
	if and only if
	there is an edge between
	the corresponding nodes
	in the compatibility graph.
	An example is shown in
	\figurename~\ref{fig:graph}.
	To simplify reading, we consistently use
	the letters $i, j \in \I$
	for customer classes
	and $k, \ell \in \K$
	for server classes.
	
	\paragraph{Discrete-time stochastic matching}
	
	Unmatched customers and servers
	are stored in two separate queues
	in their arrival order.
	In \figurename~\ref{fig:state},
	items are ordered
	from the oldest on the left
	to the newest on the right,
	and each item is labeled by its class.
	The two queues are initially empty.
	Time is slotted and,
	during each time slot,
	exactly one customer \emph{and}
	exactly one server arrive.
	The incoming customer
	belongs to class~$i$
	with probability $\lambda_i > 0$,
	for each $i \in \I$,
	and the incoming server
	belongs to class~$k$
	with probability $\mu_k > 0$
	for each $k \in \K$,
	with $\sum_{i \in \I} \lambda_i
	= \sum_{k \in \K} \mu_k = 1$.
	The classes of incoming customers and servers
	are independent within and across time slots.
	The matching policy,
	called \emph{first-come-first-matched},
	consists of applying
	the following four steps
	upon each arrival:
	\begin{enumerate}
		\item Match the incoming customer
		with the compatible unmatched server
		that has been in the queue the longest, if any.
		\item Match the incoming server
		with the compatible unmatched customer
		that has been in the queue the longest, if any.
		\item If neither the incoming customer
		nor the incoming server
		can be matched with unmatched items,
		match them together if they are compatible.
		\item If an incoming customer and/or incoming server remains unmatched after the previous steps, it is appended to the back of its respective queue.
	\end{enumerate}
	When two items are matched with one another,
	they immediately disappear.
	In the example of \figurename~\ref{fig:state},
	the couple (2, C) arrives
	while the sequence of unmatched
	customer and server classes are
	$(1, 2, 1)$ and $(D, D, D)$.
	According to the compatibility graph
	of \figurename~\ref{fig:graph},
	class~C is compatible
	with class~2 but not with class~1.
	Therefore, the incoming class-C server
	is matched with the second oldest unmatched customer,
	of class~2.
	The incoming class-2 customer
	is not matched with any present item
	(even if it is compatible
	with the incoming class-C server),
	therefore it is appended to
	the queue of unmatched customers.
	After this transition,
	the sequence of unmatched
	customer classes becomes $(1, 1, 2)$,
	while the sequence of unmatched
	server classes is unchanged.
	
	\begin{remark}
		If we would consider the random sequences
		of classes of incoming customers and servers,
		we would retrieve the state descriptor
		of the infinite bipartite matching model
		introduced in~\cite{CKW09}
		and studied in~\cite{ABMW17,AKRW18,AW12,BGM13}.
		For analysis purposes, we however adopted the above-introduced state descriptor consisting of the sequences
		of (waiting) unmatched customers and servers,
		corresponding to the
		\emph{natural pair-by-pair FCFS Markov chain}
		introduced in \cite[Section~2]{ABMW17}.
	\end{remark}
	
	\paragraph{Set notation}
	
	The following notation
	will be useful.
	Given two sets $\A$ and~$\B$,
	we write $\A \subseteq \B$
	if $\A$ is a subset of $\B$
	and $\A \subsetneq \B$
	if $\A$ is a proper subset of~$\B$.
	For each $i \in \I$,
	we let $\K_i \subseteq \K$ denote
	the set of server classes
	that can be matched
	with class-$i$ customers.
	Similarly, for each $k \in \K$,
	we let $\I_k \subseteq \I$ denote
	the set of customer classes
	that can be matched
	with class-$k$ servers.
	For each $i \in \I$ and $k \in \K$,
	the statements $i \sim k$,
	$i \in \I_k$, and $k \in \K_i$
	are equivalent.
	In \figurename~\ref{fig:graph} for instance,
	we have $\K_1 = \{A, B\}$, $\K_2 = \{B, C\}$,
		$\K_3 = \{C, D\}$, $\K_4 = \{D, E\}$
		$\I_A = \{1\}$, $\I_B = \{1, 2\}$,
		$\I_C = \{2, 3\}$, $\I_D = \{3, 4\}$,
		and $\I_E = \{4\}$.
	With a slight abuse of notation,
	for each $\A \subseteq \I$, we let
	$\lambda(\A) = \sum_{i \in \A} \lambda_i$
	denote the probability that
	the class of an incoming customer belongs to $\A$
	and $\K(\A) = \bigcup_{i \in \A} \K_i$
	the set of server classes
	that are compatible with customer classes in $\A$.
	Similarly, for each $\A \subseteq \K$,
	we write
	$\mu(\A) = \sum_{k \in \A} \mu_k$
	and $\I(\A) = \bigcup_{k \in \A} \I_k$.
	In particular, we have
	$\lambda(\I) = \mu(\K) = 1$,
	$\K(\I) = \K$, and $\I(\K) = \I$.

	\subsection{Discrete-time Markov chain} \label{subsec:dtmc}
	
	We now consider a Markov chain
	that describes the evolution of the system.
	
	\paragraph{System state}
	
	We consider the couple $(c, d)$, where
	$c = (c_1, \ldots, c_n) \in \I^*$
	is the sequence of unmatched customer classes,
	ordered by arrival,
	and $d = (d_1, \ldots, d_n) \in \K^*$
	is the sequence of unmatched server classes,
	ordered by arrival.
	In particular, $c_1$ is the class
	of the oldest unmatched customer, if any,
	and $d_1$ is the class
	of the oldest unmatched server, if any.
	The notation $\I^*$ (resp.\ $\K^*$) refers to
	the Kleene star on $\I$ (resp.\ $\K$), that is,
	the set of sequences
	of elements in $\I$ (resp.\ $\K$)
	with a length that is finite but arbitrarily large
	\cite[Chapter~1, Section~2]{D09}.
	As we will see later,
	the matching policy guarantees that the numbers
	of unmatched customers and servers
	are always equal to each other,
	and consequently the integer~$n$
	will be called the \emph{length} of the state.
	The empty state, with $n = 0$,
	is denoted by $\varnothing$.
	
	The evolution of this state over time
	defines a (discrete-time) Markov chain
	that is further detailed below.
	For each sequence $c = (c_1, \ldots, c_n) \in \I^*$,
	we let $|c| = n$ denote the length of sequence~$c$,
	$|c|_i$ the number of occurrences
	of class~$i$ in sequence~$c$, for each $i \in \I$,
	and, with a slight abuse of notation,
	$\{c_1, \ldots, c_n\}$ the set of classes
	that appear in sequence~$c$
	(irrespective of their multiplicity).
	Analogous notation is introduced
	for each sequence $d = (d_1, \ldots, d_n) \in \K^*$.
	
	\paragraph{Transitions}
	
	Each transition of the Markov chain
	is triggered by the arrival
	of a customer-server couple.
	We distinguish five types of transitions
	depending on their impact on the queues
	of unmatched customers and servers:
	\begin{itemize}[leftmargin=9.5mm]
		\item[\namedlabel{minus/minus}{$-$/$-$}]
		The incoming customer
		is matched with an unmatched server
		and the incoming server
		is matched with an unmatched customer.
		\item[\namedlabel{pm/equal}{$\pm$/$=$}]
		The incoming customer cannot
		be matched with any present server
		but the incoming server
		is matched with an unmatched customer.
		\item[\namedlabel{equal/pm}{$=$/$\pm$}]
		The incoming customer
		is matched with a present server
		but the incoming server cannot
		be matched with any present customer.
		\item[\namedlabel{equal/equal}{$=$/$=$}]
		Neither the incoming customer
		nor the incoming server
		can be matched with an unmatched item,
		but they are matched with one another.
		\item[\namedlabel{plus/plus}{$+$/$+$}]
		Neither the incoming customer
		nor the incoming server
		can be matched with an unmatched item,
		and they cannot be matched with one another.
	\end{itemize}
	Labels indicate the impact
	of the corresponding transition.
	For instance, a transition \ref{minus/minus}
	leads to a deletion ($-$) in the customer queue
	and a deletion ($-$) in the server queue,
	while a transition \ref{pm/equal}
	leads to a replacement ($\pm$) in the customer queue
	and no modification in the server queue ($=$).
	Transitions \ref{minus/minus}
	reduce the lengths of both queues by one,
	transitions \ref{pm/equal}, \ref{equal/pm},
	and \ref{equal/equal} leave the queue lengths unchanged,
	and transitions \ref{plus/plus}
	increase the lengths of both queues by one.
	Note that the numbers of unmatched customers and servers
	are always equal to each other.
	We omit the transition probabilities,
	as we will rely on an existing result
	giving the stationary distribution
	of the Markov chain.
	
	\paragraph{State space}
	
	The greediness of the matching policy
	prevents the queues from containing
	an unmatched customer and an unmatched server
	that are compatible.
	Therefore, the state space
	of the Markov chain
	is the subset of $\I^* \times \K^*$ given by
	\begin{align*}
		\Pi = \bigcup_{n = 0}^\infty
		\left\{
		(c, d) \in \I^n \times \K^n:
		c_p \nsim d_q
		\text{ for each }
		p, q \in \{1, \ldots, n\}
		\right\}.
	\end{align*}
	The Markov chain is irreducible.
	Indeed, using the facts that
	the compatibility graph is connected,
	that $\lambda_i > 0$ for each $i \in \I$,
	and that $\mu_k > 0$ for each $k \in \K$,
	we can show that
	the Markov chain can go
	from any state $(c, d) \in \Pi$
	to any state $(c', d') \in \Pi$
	via state $\varnothing$
	in $|c| + |c'| = |d| + |d'|$ jumps.
	
	\begin{remark} \label{remark:ctmc}
		We can also consider
		the following continuous-time variant
		of the model introduced
		in Section~\ref{subsec:model}.
		Instead of assuming that time is slotted,
		we can assume that customer-server couples
		arrive according to
		a Poisson process with unit rate.
		If the class of the incoming customers and servers
		are drawn independently at random,
		according to the probabilities
		$\lambda_i$ for $i \in \I$
		and $\mu_k$ for $k \in \K$,
		then the rate diagram
		of the continuous-time Markov chain
		describing the evolution
		of the sequences of unmatched items
		is identical to the transition diagram
		of the Markov chain
		introduced above.
		Consequently,
		the results recalled
		in Section~\ref{subsec:steady}
		and those derived in Section~\ref{sec:performance}
		can be applied without any modification
		to this continuous-time Markov chain.
	\end{remark}

	\subsection{Stability conditions and stationary distribution} \label{subsec:steady}
	
	For purposes of later analysis,
	we now state the following theorem,
	which was proved in
	\cite[Theorem~3]{AW12} and
	\cite[Lemma~2 and Theorems~2 and~8]{ABMW17}.
	
	\begin{theorem} \label{theo:distribution}
		The stationary measures of
		the Markov chain
		associated with the system state
		are of the form
		\begin{align} \label{eq:picd}
			\pi(c, d)
			= \pi(\varnothing)
			\prod_{p = 1}^n
			\frac{\lambda_{c_p}}
			{\mu(\K(\{c_1, \ldots, c_p\}))}
			\frac{\mu_{d_p}}
			{\lambda(\I(\{d_1, \ldots, d_p\}))},
			\quad (c, d) \in \Pi.
		\end{align}
		The system is stable,
		in the sense that this Markov chain is ergodic,
		if and only if one of the following
		two equivalent conditions is satisfied:
		\begin{align}
			\label{eq:stability-customers}
			&\lambda(\A) < \mu(\K(\A))
			\text{ for each non-empty set
				$\A \subsetneq \I$}, \\
			\label{eq:stability-servers}
			&\mu(\A) < \lambda(\I(\A))
			\phantom{,}
			\text{ for each non-empty set
				$\A \subsetneq \K$}.
		\end{align}
		In this case,
		the stationary distribution
		of the Markov chain
		associated with the system state
		is given by~\eqref{eq:picd},
		with the normalization constant
		\begin{align} \label{eq:normalization}
			\pi(\varnothing)
			&= \left(
			\sum_{(c, d) \in \Pi}
			\prod_{p = 1}^n
			\frac{\lambda_{c_p}}
			{\mu(\K(\{c_1, \ldots, c_p\}))}
			\frac{\mu_{d_p}}
			{\lambda(\I(\{d_1, \ldots, d_p\}))}
			\right)^{-1}.
		\end{align}
	\end{theorem}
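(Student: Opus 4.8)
The plan is to prove the three assertions in turn, observing that the last one is essentially a corollary of the first two: once the chain is known to be ergodic, its unique stationary distribution must coincide with the (now summable) stationary measure~\eqref{eq:picd} divided by its total mass, which is precisely~\eqref{eq:normalization}. Note also that the product-form claim~\eqref{eq:picd} is a statement about stationary \emph{measures} and should be proved with no stability assumption.

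For~\eqref{eq:picd} I would verify the global balance equations, and the natural way to do this is via partial balance, exploiting the fact that each of the two queues, viewed in isolation, behaves like an order-independent queue. Indeed, from a state $(c,d)$ with $c = (c_1,\ldots,c_n)$, an incoming server removes the customer in position $p$ exactly when its class lies in $\K_{c_p}\setminus\K(\{c_1,\ldots,c_{p-1}\})$, so the customer in position $p$ is ``served'' at the order-independent rate $\mu(\K(\{c_1,\ldots,c_p\}))-\mu(\K(\{c_1,\ldots,c_{p-1}\}))$, and symmetrically for the server queue with the sets $\I(\cdot)$. Concretely, I would split the transitions into the five types $-/-$, $\pm/=$, $=/\pm$, $=/=$, $+/+$, write down the corresponding probabilities, and check that~\eqref{eq:picd} satisfies, for each state, a partial-balance identity that matches the ``customer-side'' outflux with the inflow carried by transitions that change the customer queue, and likewise on the server side; summing these recovers global balance. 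The ratio $\pi(c,d)/\pi(\varnothing)$ telescopes so that appending a customer of class $c_n$ multiplies by $\lambda_{c_n}/\mu(\K(\{c_1,\ldots,c_n\}))$, which is what makes the identities hold term by term. An alternative, possibly cleaner route is to guess the time-reversed chain --- which one expects to again be a matching model of the same type --- and apply Kelly's lemma (in its discrete-time form); I would keep this in reserve if the direct bookkeeping becomes unwieldy.

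For the stability characterization I would proceed in three steps. First, the equivalence of~\eqref{eq:stability-customers} and~\eqref{eq:stability-servers} is a Hall / LP-duality-type fact: assuming the former and given a non-empty $\A\subsetneq\K$, take $\B=\I\setminus\I(\A)$ (the customers compatible with no server in $\A$); connectedness of the graph forces $\B\neq\I$, the case $\B=\varnothing$ is immediate since $\mu(\A)<1=\lambda(\I(\A))$, and otherwise $\K(\B)\subseteq\K\setminus\A$ yields $\lambda(\B)<\mu(\K(\B))\le 1-\mu(\A)$, i.e.\ $\mu(\A)<\lambda(\I(\A))$; the reverse implication is symmetric. Second, for necessity I would show that if $\lambda(\A)\ge\mu(\K(\A))$ for some non-empty $\A\subsetneq\I$, then the total number of unmatched customers in classes $\A$ has non-negative drift, since such customers can only be removed when a server in $\K(\A)$ arrives; a standard sample-path/Lyapunov argument then rules out positive recurrence, hence ergodicity. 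Third, for sufficiency I would use that, the invariant measure of an irreducible chain being unique up to scaling, ergodicity is equivalent to summability of~\eqref{eq:picd}; I would then bound $\sum_{(c,d)\in\Pi}\prod_{p=1}^n(\cdots)$ by grouping states according to their length $n$ and to the sets of classes occurring in $c$ and $d$, converting the strict inequalities in~\eqref{eq:stability-customers}--\eqref{eq:stability-servers} into a geometric decay in $n$.

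The main obstacle, I expect, is twofold. On the product-form side, the delicate point is the first-come-first-matched tie-breaking: one must track exactly which position is vacated by an arriving item and check that ``re-inserting'' that item into the precursor state is consistent with the product structure, with the transitions of type $-/-$ being the trickiest since they touch both queues at once. On the stability side, the real work is the summability estimate underpinning sufficiency, where the strictness of~\eqref{eq:stability-customers}--\eqref{eq:stability-servers} has to be turned into a quantitative bound on a multi-index sum over $\Pi$.
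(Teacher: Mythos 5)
First, a point of comparison: the paper does not prove Theorem~\ref{theo:distribution} at all --- it is recalled from \cite[Theorem~3]{AW12} and \cite[Lemma~2, Theorems~2 and~8]{ABMW17} --- so there is no in-paper proof to measure your plan against, only the strategy of those references.

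With that caveat, your outline is sound and close to how the result is actually established. The stability half is essentially complete as you state it: the $\B=\I\setminus\I(\A)$ duality argument for the equivalence of \eqref{eq:stability-customers} and \eqref{eq:stability-servers} is correct (including the two edge cases you isolate), the necessity argument via the lower bound ``unmatched $\A$-customers $\ge$ ($\A$-customer arrivals) $-$ ($\K(\A)$-server arrivals)'' is the standard one from \cite{BGM13,ABMW17} (just be careful in the critical case $\lambda(\A)=\mu(\K(\A))$, where ``non-negative drift'' alone is not enough and one should invoke non-tightness of the dominating mean-zero random walk), and sufficiency via summability of \eqref{eq:picd} grouped by length and by the sets of present classes is exactly the mechanism that the paper's own $\Delta(\A)>0$ condition in \eqref{eq:delta} encodes. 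The one place where your plan stops short of a proof is the verification of \eqref{eq:picd} itself: you correctly identify the order-independent ``service rate'' $\mu(\K(\{c_1,\ldots,c_p\}))-\mu(\K(\{c_1,\ldots,c_{p-1}\}))$ of the customer in position $p$, and you correctly flag the \ref{minus/minus} transitions as the crux, but the partial-balance bookkeeping for simultaneous arrivals (one event can simultaneously delete an interior customer and an interior server, and the position re-inserted in the precursor state is not the last one) is precisely where a direct check becomes unwieldy; this is why \cite{ABMW17} instead exhibits the reversed chain explicitly (``dynamic reversibility'': the reversed process is again a matching model with the roles of arrivals and departures exchanged) and concludes by Kelly's lemma. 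In other words, the route you keep ``in reserve'' is the one that actually closes the argument in the literature, and I would promote it to the main line rather than attempt the term-by-term balance identities.
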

	
	The states of the two queues
	are not independent in general
	because their lengths are equal.
	However, \eqref{eq:picd} shows that
	these two queue states
	are \emph{conditionally} independent
	\emph{given} the number~$n$ of unmatched items.
	This property will contribute
	to simplify the analysis
	in Section~\ref{sec:performance}.
	
	\begin{remark}
		The stationary measures~\eqref{eq:picd}
		seem identical to
		the stationary measures
		associated with another queueing model,
		called an FCFS-ALIS parallel queueing model
		\cite{AKRW18,AW14}.
		The only (crucial) difference
		lies in the definition
		of the state space
		of the corresponding Markov chain.
		In particular,
		our model imposes that the lengths
		of the two queues are equal to each other.
		In contrast,
		in the FCFS-ALIS parallel queueing model,
		there is an upper bound on
		the number of unmatched servers,
		while the number of customers
		can be arbitrarily large.
		This difference significantly
		changes the analysis.
		The analysis that
		we propose in Section~\ref{sec:performance}
		is based on the resemblance
		with another queueing model,
		called a multi-server queue for simplicity,
		that was introduced in~\cite{C19,G15}.
	\end{remark}

	\section{Performance evaluation by state aggregation} \label{sec:performance}
	
	We now assume that the stability
	conditions~\eqref{eq:stability-customers}\textendash\eqref{eq:stability-servers} are satisfied,
	and we let $\pi$ denote
	the stationary distribution,
	recalled in Theorem~\ref{theo:distribution},
	of the Markov chain
	of Section~\ref{subsec:dtmc}.
	Sections~\ref{subsec:empty}
	to~\ref{subsec:waiting-time}
	provide closed-form expressions
	for several performance metrics,
	based on a method explained
	in Section~\ref{subsec:partition}.
	The time complexity to implement these formulas
	and the relation with
	related works~\cite{ABMW17,AW12}
	are discussed in
	Section~\ref{subsec:discussion}.
	The reader who is not interested
	in understanding the proofs can move directly
	to Section~\ref{subsec:empty}.
	
	\subsection{Partition of the state space} \label{subsec:partition}
	
	A naive application of~\eqref{eq:normalization}
	does not allow calculation of the normalization constant,
	nor any other long-run performance metric as a result,
	because the state-space $\Pi$ is infinite.
	To circumvent this, we define a partition
	of the state space.
	
	\paragraph{Partition of the state space~$\Pi$}
	Let $\ind$ denote
	the family of sets $\A \subseteq \I \cup \K$
	such that
	$\A$ is an independent set
	of the compatibility graph
	and the sets
	$\A \cap \I$ and $\A \cap \K$ are non-empty.
	Also let $\inde
	= \ind \cup \{\emptyset\}$.
	For each $\A \in \inde$,
	we let $\Pi_{\A}$ denote
	the set of couples $(c, d) \in \Pi$
	such that $\{c_1, \ldots, c_n\} = \A \cap \I$
	and $\{d_1, \ldots, d_n\} = \A \cap \K$;
	in other words, $\Pi_\A$ is the set of states
	such that the set of unmatched classes is $\A$.
	We can show that
	$\{\Pi_{\A}, \A \in \inde\}$
	forms a partition of $\Pi$,
	and in particular
	\begin{align*}
		\Pi = \bigcup_{\A \in \inde} \Pi_{\A}.
	\end{align*}
	The first cornerstone of the subsequent analysis
	is the observation that,
	for each $(c,d) \in \Pi_{\A}$, we have
	$\mu(\K(\{c_1, \ldots, c_n\}))
	= \mu(\K(\A \cap \I))$
	and $\lambda(\I(\{d_1, \ldots, d_n\}))
	= \lambda(\I(\A \cap \K))$.
	In anticipation of Section~\ref{subsec:empty},
	for each $\A \in \ind$,
	we let
	\begin{align} \label{eq:delta}
		\Delta(\A)
		= \mu(\K(\A \cap \I)) \lambda(\I(\A \cap \K))
		- \lambda(\A \cap \I) \mu(\A \cap \K).
	\end{align}
	One can verify that $\Delta(\A) > 0$ for each $\A \in \ind$
	if and only if the stability conditions~\eqref{eq:stability-customers}%
	--\eqref{eq:stability-servers} are satisfied.
	The product $\lambda(\A \cap \I) \mu(\A \cap \K)$
	is the probability that
	an incoming client-server couple
	has its classes in $\A$,
	while the product
	$\mu(\K(\A \cap \I)) \lambda(\I(\A \cap \K))$
	is the probability that
	an incoming client-server couple
	can be matched with
	clients and servers
	whose classes belong to~$\A$.
	By analogy with
	the queueing models in~\cite{C21,G15},
	the former product can be seen
	as the ``arrival rate''
	of the classes in~$\A$,
	while the latter product can be seen
	as the maximal ``departure rate''
	of these classes.
	
	\paragraph{Partition of the subsets~$\Pi_\A$}
	
	The second cornerstone of the analysis
	is a partition of the set $\Pi_\A$ for each $\A \in \ind$.
	More specifically,
	for each $\A \in \ind$, we have
	\begin{align} \label{eq:recurrence}
		\Pi_\A
		= \bigcup_{i \in \A \cap \I}
		\bigcup_{k \in \A \cap \K}
		\left(
		\Pi_\A
		\cup \Pi_{\A {\setminus} \{i\}}
		\cup \Pi_{\A {\setminus} \{k\}}
		\cup \Pi_{\A {\setminus} \{i, k\}}
		\right) \cdot (i, k),
	\end{align}
	where $\Se \cdot (i, k)
	= \{((c_1, \ldots, c_n, i), (d_1, \ldots, d_n, k)):
	((c_1, \ldots, c_n), (d_1, \ldots, d_n)) \in \Se\}$
	for each $\Se \subseteq \Pi$,
	$i \in \I$, and $k \in \K$,
	and the unions are disjoint.
	Indeed, for each $(c, d) \in \Pi_\A$,
	the sequence $c = (c_1, \ldots, c_n)$
	can be divided into
	a prefix $(c_1, \ldots, c_{n-1})$
	and a suffix $i = c_n$;
	the suffix can take any value in $\A \cap \I$,
	while the prefix satisfies
	$\{c_1, \ldots, c_{n-1}\} = \A \cup \I$
	or $\{c_1, \ldots, c_{n-1}\}
	= (\A {\setminus} \{i\}) \cup \I$.
	Similarly, for each $(c, d) \in \Pi_\A$,
	the sequence
	$d = (d_1, \ldots, d_n)$
	can be divided into
	a prefix $(d_1, \ldots, d_{n-1})$
	and a prefix $k = d_n$;
	the prefix can take any value in $\A \cap \K$,
	while the prefix satisfies
	$\{d_1, \ldots, d_{n-1}\} = \A \cap \K$
	or $\{d_1, \ldots, d_{n-1}\}
	= (\A {\setminus} \{k\}) \cap \K$.

	\subsection{Normalization constant} \label{subsec:empty}
	
	The first performance metric that we consider
	is the probability that the system is empty.
	According to~\eqref{eq:normalization},
	this is also the inverse
	of the normalization constant.
	With a slight abuse of notation, we first let
	\begin{align*}
		\pi(\A) = \sum_{(c,d) \in \Pi_{\A}} \pi(c, d),
		\quad \A \in \inde.
	\end{align*}
	To simplify notation,
	we adopt the convention that
	$\pi(\A) = 0$ if $\A \notin \inde$.
	The following proposition,
	combined with the normalization equation
	$\sum_{\A \in \inde} \pi(\A) = 1$,
	allows us to calculate
	the probability $\pi(\emptyset) = \pi(\varnothing)$
	that the system is empty.
	
	\begin{proposition} \label{prop:piAB}
		The stationary distribution of
		the set of unmatched item classes
		satisfies the recursion
		\begin{align}
			\nonumber
			\Delta(\A) \pi(\A)
			={}&\mu(\A \cap \K) \sum_{i \in \A \cap \I}
			\lambda_i \pi(\A {\setminus} \{i\})
			+ \lambda(\A \cap \I) \sum_{k \in \A \cap \K}
			\mu_k \pi(\A {\setminus} \{k\}) \\
			\label{eq:piAB}
			&+ \sum_{i \in \A \cap \I}
			\sum_{k \in \A \cap \K}
			\lambda_i \mu_k
			\pi(\A {\setminus} \{i, k\}),
			\quad \A \in \ind.
		\end{align}
	\end{proposition}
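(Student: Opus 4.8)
The plan is to exploit the product-form structure of~\eqref{eq:picd} together with the recursive partition~\eqref{eq:recurrence} of $\Pi_\A$. The crucial observation is that, for a state $(c,d) \in \Pi_\A$ of length $n \ge 1$, the factor contributed by the last position $p = n$ in the product~\eqref{eq:picd} depends only on $\A$ and on the pair $(i,k) := (c_n, d_n)$: since $\{c_1,\ldots,c_n\} = \A \cap \I$ and $\{d_1,\ldots,d_n\} = \A \cap \K$, that factor equals $\lambda_i/\mu(\K(\A \cap \I)) \cdot \mu_k/\lambda(\I(\A \cap \K))$. Consequently, writing $(c',d')$ for the prefix obtained by deleting $c_n$ and $d_n$, we have
\[
	\pi(c,d) = \frac{\lambda_i\,\mu_k}{\mu(\K(\A\cap\I))\,\lambda(\I(\A\cap\K))}\,\pi(c',d'),
\]
where $\pi(c',d')$ is again a stationary value of a (shorter) state of the chain.

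First I would sum this identity over all $(c,d) \in \Pi_\A$, grouped according to the value of the terminal pair $(i,k)$. By the partition~\eqref{eq:recurrence}, for fixed $(i,k) \in (\A\cap\I)\times(\A\cap\K)$ the map $(c,d) \mapsto (c',d')$ is a bijection from $\{(c,d)\in\Pi_\A : c_n = i,\ d_n = k\}$ onto the disjoint union $\Pi_\A \cup \Pi_{\A\setminus\{i\}} \cup \Pi_{\A\setminus\{k\}} \cup \Pi_{\A\setminus\{i,k\}}$, the inverse being concatenation with $(i,k)$. Summing $\pi(c',d')$ over this image thus yields $\pi(\A) + \pi(\A\setminus\{i\}) + \pi(\A\setminus\{k\}) + \pi(\A\setminus\{i,k\})$, where the convention $\pi(\cdot) = 0$ on sets outside $\inde$ exactly matches the emptiness of the corresponding $\Pi_{\cdot}$ (e.g.\ when removing $i$ empties $\A\cap\I$, forcing unequal queue lengths). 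This gives
\[
	\pi(\A) = \sum_{i \in \A\cap\I}\sum_{k \in \A\cap\K} \frac{\lambda_i\,\mu_k}{\mu(\K(\A\cap\I))\,\lambda(\I(\A\cap\K))}\Bigl(\pi(\A) + \pi(\A\setminus\{i\}) + \pi(\A\setminus\{k\}) + \pi(\A\setminus\{i,k\})\Bigr).
\]

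Then I would multiply through by $\mu(\K(\A\cap\I))\,\lambda(\I(\A\cap\K))$ and evaluate the four double sums by factoring $\sum_i\sum_k \lambda_i\mu_k(\cdot)$: the $\pi(\A)$ term gives $\lambda(\A\cap\I)\,\mu(\A\cap\K)\,\pi(\A)$, the $\pi(\A\setminus\{i\})$ term gives $\mu(\A\cap\K)\sum_{i\in\A\cap\I}\lambda_i\pi(\A\setminus\{i\})$, the $\pi(\A\setminus\{k\})$ term gives $\lambda(\A\cap\I)\sum_{k\in\A\cap\K}\mu_k\pi(\A\setminus\{k\})$, and the last term is left unchanged. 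Moving $\lambda(\A\cap\I)\mu(\A\cap\K)\pi(\A)$ to the left-hand side and recognizing $\Delta(\A) = \mu(\K(\A\cap\I))\lambda(\I(\A\cap\K)) - \lambda(\A\cap\I)\mu(\A\cap\K)$ yields precisely~\eqref{eq:piAB}.

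The main obstacle is the combinatorial bookkeeping behind the bijection in the second step: one must verify that concatenating any prefix from $\Pi_\A \cup \Pi_{\A\setminus\{i\}} \cup \Pi_{\A\setminus\{k\}} \cup \Pi_{\A\setminus\{i,k\}}$ with $(i,k)$ produces a state that (a) lies in $\Pi$, i.e.\ creates no compatible unmatched pair — which holds because $\A$ is independent and all classes involved lie in $\A$ — and (b) lies in the block $\Pi_\A$ rather than another one, since appending $i\in\A\cap\I$ and $k\in\A\cap\K$ restores exactly the class sets $\A\cap\I$ and $\A\cap\K$; and conversely that every state of $\Pi_\A$ with terminal pair $(i,k)$ arises this way, its prefix falling into the correct one of the four blocks according to whether $i$ and/or $k$ reappear before position $n$. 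Once this is established the remaining algebra is elementary; note that the empty prefix $\varnothing$ occurs only when $\A = \{i,k\}$, contributing $\pi(\varnothing)$ through the $\pi(\A\setminus\{i,k\})$ term, so no separate treatment of the length-zero state is required.
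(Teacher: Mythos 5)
Your proposal is correct and follows essentially the same route as the paper's proof: substitute the product form~\eqref{eq:picd} to peel off the last factor, use the partition~\eqref{eq:recurrence} of $\Pi_\A$ by terminal pair $(i,k)$ to arrive at~\eqref{eq:piAB-proof}, and rearrange. Your additional care about the bijection and the convention $\pi(\A)=0$ for $\A\notin\inde$ only makes explicit what the paper leaves implicit.
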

	
	\begin{proof}
		Let $\A \in \ind$.
		Substituting~\eqref{eq:picd}
		into the definition of $\pi(\A)$ yields
		\begin{align*}
			\pi(\A)
			&= \sum_{(c, d) \in \Pi_\A}
			\prod_{p = 1}^n
			\frac{\lambda_{c_p}}
			{\mu(\K(\{c_1, \ldots, c_p\}))}
			\frac{\mu_{d_p}}
			{\lambda(\I(\{d_1, \ldots, d_p\}))}, \\
			&= \sum_{(c, d) \in \Pi_\A}
			\frac{\lambda_{c_n}}{\mu(\K(\A \cap \I))}
			\frac{\mu_{d_n}}{\lambda(\I(\A \cap \K))}
			\pi((c_1, \ldots, c_{n-1}), (d_1, \ldots, d_{n-1})).
		\end{align*}
		Then, by applying~\eqref{eq:recurrence}
		and making a change of variable,
		we obtain
		\begin{align}
			\mu(\K(\A \cap \I))
			\lambda(\I(\A \cap \K))
			\pi(\A)
			= 
			\label{eq:piAB-proof}
			\sum_{i \in \A \cap \I}
			\sum_{k \in \A \cap \K}
			\lambda_i \mu_k
			\big(
			\pi(\A)
			+ \pi(\A {\setminus} \{i\})
			+ \pi(\A {\setminus} \{k\})
			+ \pi(\A {\setminus} \{i, k\})
			\big).
		\end{align}
		The result follows by rearranging the terms.
	\end{proof}
	
	\subsection{Waiting probability} \label{subsec:waiting-probability}
	
	The second performance metric that we consider is
	the waiting probability,
	that is, the probability that
	an item cannot be matched
	with another item upon arrival.
	The waiting probabilities
	of the customers and servers
	of each class can again be calculated
	using Proposition~\ref{prop:piAB},
	as they are given by
	\begin{align*}
		\omega_i
		&= \sum_{\substack{\A \in \inde:
				\A \cap \K_i = \emptyset}}
		\left(
		1 - \sum_{\substack{k \in \K_i
				{\setminus} \K(\A \cap \I)}} \mu_k
		\right)
		\pi(\A),
		\quad i \in \I, \\
		\omega_k
		&= \sum_{\substack{\A \in \inde:
				\A \cap \I_k = \emptyset}}
		\left(
		1 - \sum_{\substack{i \in \I_k
				{\setminus} \I(\A \cap \K)}} \lambda_i
		\right)
		\pi(\A),
		\quad k \in \K.
	\end{align*}
	If we consider the continuous-time variant
	described in Remark~\ref{remark:ctmc},
	these equations follow directly
	from the PASTA property.
	That this result also holds
	for the discrete-time variant of the model
	follows from the fact that
	the transition diagrams
	and stationary distributions
	of both models are identical.
	
	Corollary~\ref{coro:busy-sequence} below
	follows from Proposition~\ref{prop:piAB}.
	It shows that
	the probability that
	both the incoming customer
	and the incoming server
	can be matched with present items
	(corresponding to transitions \ref{minus/minus})
	is equal to the probability that
	both the incoming customer
	and the incoming server
	have to wait
	(corresponding to transitions \ref{plus/plus}).
	The proof is given in the appendix.
	
	\begin{corollary} \label{coro:busy-sequence}
		The following equality is satisfied:
		\begin{align} \label{eq:busy-sequence}
			\sum_{(i, k) \in \I \times \K}
			\lambda_i \mu_k
			\sum_{\substack{
					\A \in \ind:
					i \in \I(\A \cap \K), \\
					k \in \K(\A \cap \I)
			}}
			\pi(\A)
			&=
			\sum_{\substack{
					(i, k) \in \I \times \K: \\
					i \nsim k
			}}
			\lambda_i \mu_k
			\sum_{\substack{
					\A \in \inde:
					i \notin \I(\A \cap \K), \\
					k \notin \K(\A \cap \I)
			}} \pi(\A).
		\end{align}
	\end{corollary}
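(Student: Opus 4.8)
The plan is to obtain~\eqref{eq:busy-sequence} by a purely formal manipulation of the recursion of Proposition~\ref{prop:piAB}. Conceptually the left-hand side is the stationary probability of a~\ref{minus/minus} transition and the right-hand side that of a~\ref{plus/plus} transition, so the identity records that the length of the state has zero mean drift in equilibrium; but I will prove it directly from the recursion. First I would rewrite the left-hand side: interchanging its two sums and factoring, and using $\sum_{i \in \I(\A \cap \K)} \lambda_i = \lambda(\I(\A \cap \K))$ and $\sum_{k \in \K(\A \cap \I)} \mu_k = \mu(\K(\A \cap \I))$, it becomes $\sum_{\A \in \ind} \mu(\K(\A \cap \I))\, \lambda(\I(\A \cap \K))\, \pi(\A)$. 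Into each summand I would substitute equation~\eqref{eq:piAB-proof} from the proof of Proposition~\ref{prop:piAB} (the balance relation before its terms are rearranged).

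Next I would interchange the order of summation so as to sum over the pair $(i, k)$ before the set~$\A$. Since the conditions $i \in \A \cap \I$, $k \in \A \cap \K$, $\A \in \ind$ just say that $\A$ is an independent set containing both $i$ and $k$, they force $i \nsim k$, and conversely every such pair is realized (e.g.\ by $\A = \{i, k\}$). Hence the left-hand side equals $\sum_{(i, k):\, i \nsim k} \lambda_i \mu_k\, S(i, k)$, where $S(i, k) = \sum_{\A \text{ independent},\, i, k \in \A} \big( \pi(\A) + \pi(\A {\setminus} \{i\}) + \pi(\A {\setminus} \{k\}) + \pi(\A {\setminus} \{i, k\}) \big)$. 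It then suffices to show, for each fixed $(i, k)$ with $i \nsim k$, that $S(i, k) = \sum_{\B \in \inde :\, i \notin \I(\B \cap \K),\, k \notin \K(\B \cap \I)} \pi(\B)$, since substituting this back reproduces exactly the right-hand side of~\eqref{eq:busy-sequence}.

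To establish the expression for $S(i, k)$ I would reindex it by the argument of $\pi$: a set $\B$ occurs as $\A$, as $\A {\setminus} \{i\}$, as $\A {\setminus} \{k\}$, or as $\A {\setminus} \{i, k\}$ (for some independent $\A$ containing $i$ and $k$) under explicit membership conditions on $\B$, while contributions with $\B \notin \inde$ vanish by the convention $\pi(\B) = 0$. Collecting the coefficient of $\pi(\B)$ and splitting into the four cases according to whether $i \in \B$ and whether $k \in \B$ — and using that $i \in \B$ forces $i \notin \I(\B \cap \K)$ and $k \in \B$ forces $k \notin \K(\B \cap \I)$ because $\B$ is independent — the coefficient turns out to equal $1$ exactly when $i \notin \I(\B \cap \K)$ and $k \notin \K(\B \cap \I)$, and $0$ otherwise, in each of the four cases; this yields the claimed identity.

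The only genuine work is the bookkeeping in this last step: for a fixed target set $\B$ one must correctly identify from which independent supersets of $\{i, k\}$ it arises in each of the four ways, together with the independence side-conditions these impose, and then check that the four contributions telescope into a single indicator. The observation that $i \in \B$ already implies $i \notin \I(\B \cap \K)$ (and the symmetric statement for $k$) is what makes the four cases coincide, and keeping in mind the convention $\pi(\cdot) = 0$ off $\inde$ disposes of the degenerate sets, such as $\B = \varnothing$ or sets meeting only one side of the compatibility graph.
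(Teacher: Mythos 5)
Your proof is correct and follows essentially the same route as the paper's: both sum the pre-rearrangement balance relation~\eqref{eq:piAB-proof} over all $\A \in \ind$, recognize the left-hand side of~\eqref{eq:busy-sequence} after interchanging sums, and then match the four terms on the right to the four cases of membership of $i$ and $k$ in the target set — your ``coefficient of $\pi(\B)$'' bookkeeping is exactly the paper's change of variables applied to each of the four sums. One small correction to your closing remark: $\varnothing \in \inde$, so the convention $\pi(\cdot) = 0$ off $\inde$ does \emph{not} dispose of $\B = \varnothing$; rather, $\pi(\varnothing)$ legitimately appears with coefficient one on both sides (arising as $\A \setminus \{i, k\}$ with $\A = \{i, k\}$, which is in $\ind$ precisely because $i \nsim k$), and your four-case computation already handles this correctly, so only the wording of that aside needs fixing.
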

	
	This corollary means that, in the long run,
	the rate at which the queue lengths increase
	is equal to
	the rate at which the queue lengths decrease.
	Equation~\eqref{eq:busy-sequence}
	is therefore satisfied by every matching policy
	that makes the system stable. This equation
	also has the following graphical interpretation.
	Consider a \emph{busy sequence} of the system,
	consisting of a sequence of customer classes
	and a sequence of server classes
	that arrive between
	two consecutive instants
	when both queues are empty.
	We construct a bipartite graph,
	whose nodes are the elements
	of these two sequences,
	by adding an edge between customers and servers
	that arrive at the same time
	or are matched with one another.
	An example is shown
	in~\figurename~\ref{fig:busy-sequence}
	for the compatibility graph
	of \figurename~\ref{fig:graph}.
	If we ignore the customer-server couples
	that arrive at the same time
	and are also matched with one another,
	we obtain a 2-regular graph,
	that is, a graph where all nodes have degree two.
	Such a graph consists of one or more disconnected cycles.
	We define a left (resp.\ right) extremity
		as a vertical edge adjacent
		only to edges moving to the right (resp.\ left);
		such an edge represents
		a \ref{plus/plus}
		(resp.\ \ref{minus/minus})
		transition.
		One can verify that each cycle
		contains as many
		left extremities as right extremities.
	In the example of
	\figurename~\ref{fig:busy-sequence},
	after eliminating the couple 1--A,
	we obtain two disconnected cycles.
	The cycle depicted with a solid line
	has one left extremity (2--A)
	and one right extremity (4--C).
	The cycle depicted with a dashed line
	also has one left extremity (3--E)
	and one right extremity (4--C).
	Since stability means that
	the mean length of a busy sequence is finite,
	combining this observation
	with classical results from renewal theory
	gives an alternative proof
	that~\eqref{eq:busy-sequence}
	is satisfied by every matching policy
	that makes the system stable.
	
	\begin{figure}[ht]
		\centering
		\begin{tikzpicture}
			\def\width{1.1cm}
			\def\height{1.1cm}
			
			\node[customer] (1) {2};
			\node[customer] (2)
			at ($(1)+(\width,0)$) {3};
			\node[customer] (3)
			at ($(2)+(\width,0)$) {4};
			\node[customer] (4)
			at ($(3)+(\width,0)$) {1};
			\node[customer] (5)
			at ($(4)+(\width,0)$) {2};
			\node[customer] (6)
			at ($(5)+(\width,0)$) {1};
			\node[customer] (7)
			at ($(6)+(\width,0)$) {4};
			
			\node[server] (A)
			at ($(1)+(0,-\height)$) {A};
			\node[server] (B)
			at ($(2)+(0,-\height)$) {E};
			\node[server] (C)
			at ($(3)+(0,-\height)$) {D};
			\node[server] (D)
			at ($(4)+(0,-\height)$) {D};
			\node[server] (E)
			at ($(5)+(0,-\height)$) {B};
			\node[server] (F)
			at ($(6)+(0,-\height)$) {A};
			\node[server] (G)
			at ($(7)+(0,-\height)$) {C};
			
			\draw[-, blue]
			(1.south) -- (E.north) -- (5.south)
			-- (G.north) -- (7.south)
			-- (D.north) -- (4.south) -- (A.north) 
			-- (1.south);
			\draw[-, red, densely dashed]
			(2.south) -- (C.north) -- (3.south)
			-- (B.north) -- (2.south);
			\draw[-, violet, densely dashdotted]
			(6.south) -- (F.north);
			
			\phantom{
				\node at ($(1)-(2cm,0)$) {};
				\node at ($(7)+(2cm,0)$) {};
			}
		\end{tikzpicture}
		\caption{A busy sequence
			associated with the compatibility graph of
			\figurename~\ref{fig:graph}.
			The arrival sequences
			are 2, 3, 4, 1, 2, 1, 4
			and A, E, D, D, B, A, C.
			Each component of
			the corresponding bipartite graph
			is depicted with a different line style.%
			\label{fig:busy-sequence-graph}}
		\label{fig:busy-sequence}
	\end{figure}
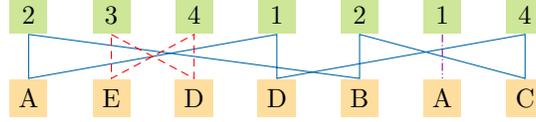
	
	\subsection{Mean number of unmatched items and mean waiting time} \label{subsec:waiting-time}
	
	We now turn to
	the mean number of unmatched items.
	Proposition~\ref{prop:LipiAB}
	gives a closed-form expression
	for the mean number of unmatched
	customers of each class.
	Proposition~\ref{prop:LIpiAB}
	gives a simpler expression
	for the mean number of unmatched
	customers (all classes included).
The proofs are similar to that of Proposition~\ref{prop:piAB}, with a few technical complications, and are deferred to the appendix.
	Analogous results can be obtained
	for the servers by using
	the model symmetry.
	
	\begin{proposition} \label{prop:LipiAB}
		For each $i \in \I$,
		the mean number of unmatched
		class-$i$ customers
		is $L_i = \sum_{\A \in \inde}
		\ell_i(\A)$,
		where $\ell_i(\A) / \pi(\A)$
		is the mean number of unmatched
		class-$i$ customers
		given that the set of unmatched classes is $\A$,
		and satisfies the recursion
		\begin{align}
			\nonumber
			\Delta(\A) \ell_i(\A)
			={}
			&
			\lambda_i \mu(\A \cap \K) \left(
			\pi(\A) + \pi(\A {\setminus} \{i\})
			\right)
			+ \lambda_i
			\sum_{k \in \A \cap \K} \mu_k \left(
			\pi(\A {\setminus} \{k\})
			+ \pi(\A {\setminus} \{i, k\})
			\right) \\
			\nonumber
			&+ \mu(\A \cap \K)
			\sum_{j \in \A}
			\lambda_j \ell_i(\A {\setminus} \{j\})
			+\lambda(\A \cap \I) \sum_{k \in \A \cap \K}
			\mu_k \ell_i(\A {\setminus} \{k\}) \\
			\label{eq:LipiAB}
			&+ \sum_{j \in \A}
			\sum_{k \in \A \cap \K}
			\lambda_j \mu_k
			\ell_i(\A {\setminus} \{j, k\}),
		\end{align}
		for each $\A \in \ind$ such that $i \in \A$,
		with the base case
		$\ell_i(\A) = 0$ if $i \notin \A$
		and the convention that $\ell_i(\A) = 0$
		if $\A \notin \inde$.
	\end{proposition}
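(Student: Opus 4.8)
The plan is to follow the proof of Proposition~\ref{prop:piAB} almost verbatim, with one extra combinatorial weight to carry along. First I would record that $\ell_i(\A)=\sum_{(c,d)\in\Pi_\A}|c|_i\,\pi(c,d)$: this equals $\pi(\A)$ times the conditional mean number of class-$i$ customers given that the set of unmatched classes is $\A$, it vanishes when $i\notin\A$ (no state of $\Pi_\A$ then contains a class-$i$ customer) or when $\A\notin\inde$ (then $\Pi_\A=\emptyset$), and summing over the partition $\Pi=\bigcup_{\A\in\inde}\Pi_\A$ gives $L_i=\sum_{\A\in\inde}\ell_i(\A)$. So it remains to establish the recursion~\eqref{eq:LipiAB}.

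For the recursion, fix $\A\in\ind$ with $i\in\A$; then every $(c,d)\in\Pi_\A$ has length $n\ge1$. Substituting the product form~\eqref{eq:picd} into the definition of $\ell_i(\A)$ and peeling off the last coordinate (as in the proof of Proposition~\ref{prop:piAB}, using $\mu(\K(\{c_1,\dots,c_n\}))=\mu(\K(\A\cap\I))$ and $\lambda(\I(\{d_1,\dots,d_n\}))=\lambda(\I(\A\cap\K))$ for $(c,d)\in\Pi_\A$) gives
\[
  \mu(\K(\A\cap\I))\,\lambda(\I(\A\cap\K))\,\ell_i(\A)
  = \sum_{(c,d)\in\Pi_\A} |c|_i\,\lambda_{c_n}\,\mu_{d_n}\,
    \pi\big((c_1,\dots,c_{n-1}),(d_1,\dots,d_{n-1})\big).
\]
The new ingredient relative to Proposition~\ref{prop:piAB} is the identity $|c|_i=|(c_1,\dots,c_{n-1})|_i+\mathbf{1}_{\{c_n=i\}}$, which splits this expansion into a sum weighted by the prefix count $|(c_1,\dots,c_{n-1})|_i$ and a sum weighted by the indicator $\mathbf{1}_{\{c_n=i\}}$.

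Next I would apply the partition~\eqref{eq:recurrence} of $\Pi_\A$ and reindex by the suffix $(i',k)=(c_n,d_n)$, just as in the step from~\eqref{eq:recurrence} to~\eqref{eq:piAB-proof}. In the prefix-count sum, the prefix $((c_1,\dots,c_{n-1}),(d_1,\dots,d_{n-1}))$ ranges over the disjoint union $\Pi_\A\cup\Pi_{\A\setminus\{i'\}}\cup\Pi_{\A\setminus\{k\}}\cup\Pi_{\A\setminus\{i',k\}}$, so for each $(i',k)$ one gets the factor $\lambda_{i'}\mu_k\big(\ell_i(\A)+\ell_i(\A\setminus\{i'\})+\ell_i(\A\setminus\{k\})+\ell_i(\A\setminus\{i',k\})\big)$ --- structurally the recursion of Proposition~\ref{prop:piAB} with $\ell_i$ in place of $\pi$. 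The indicator sum is supported on $i'=i$ and contributes $\lambda_i\mu_k\big(\pi(\A)+\pi(\A\setminus\{i\})+\pi(\A\setminus\{k\})+\pi(\A\setminus\{i,k\})\big)$. Summing over $i'\in\A\cap\I$ and $k\in\A\cap\K$ with $\sum_{i'}\lambda_{i'}=\lambda(\A\cap\I)$ and $\sum_k\mu_k=\mu(\A\cap\K)$, moving the resulting $\lambda(\A\cap\I)\mu(\A\cap\K)\,\ell_i(\A)$ term to the left-hand side, and using $\mu(\K(\A\cap\I))\lambda(\I(\A\cap\K))-\lambda(\A\cap\I)\mu(\A\cap\K)=\Delta(\A)$ from~\eqref{eq:delta}, one arrives at~\eqref{eq:LipiAB}: its first line is the $i'=i$ contribution of the indicator sum, and its remaining lines are the regrouped prefix-count sum (reading $\sum_{j\in\A}\lambda_j(\cdots)$ as $\sum_{j\in\A\cap\I}\lambda_j(\cdots)$, since only customer classes carry arrival probabilities).

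The main obstacle is bookkeeping rather than any new idea: one must verify that the degenerate cases are absorbed by the conventions $\pi(\A)=\ell_i(\A)=0$ for $\A\notin\inde$ and $\ell_i(\A)=0$ for $i\notin\A$ --- for example when $\A\cap\I=\{i\}$ so that $\A\setminus\{i\}\notin\inde$, or when a peeled prefix equals the empty state --- and note in particular that the term $\mu(\A\cap\K)\,\lambda_i\,\ell_i(\A\setminus\{i\})$ appearing on the right-hand side of~\eqref{eq:LipiAB} is harmless because $\ell_i(\A\setminus\{i\})=0$ by the base case. The analogous recursion for the mean number of class-$k$ servers then follows by exchanging the roles of $(\I,\lambda,c)$ and $(\K,\mu,d)$, an exchange under which~\eqref{eq:picd}, \eqref{eq:recurrence}, and~\eqref{eq:delta} are all invariant.
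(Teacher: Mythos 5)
Your proposal is correct and follows essentially the same route as the paper's proof: substitute the product form~\eqref{eq:picd}, peel off the last coordinate, apply the partition~\eqref{eq:recurrence}, and track the extra weight $|c|_i$ through the change of variable. Your identity $|c|_i = |(c_1,\dots,c_{n-1})|_i + \mathbf{1}_{\{c_n=i\}}$ is precisely the case split the paper performs (suffix class equal to $i$ contributing weight $|c|_i+1$ versus suffix class $j\neq i$ contributing weight $|c|_i$), and your handling of the degenerate terms via the conventions matches the paper's.
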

	
	\begin{proposition} \label{prop:LIpiAB}
		The mean number of
		unmatched customers
		is $L_\I = \sum_{\A \in \inde}
		\ell_\I(\A)$,
		where $\ell_\I(\A) / \pi(\A)$ 
		is the mean number of unmatched customers
			given that the set of unmatched classes is $\A$,
			and satisfies the recursion
		\begin{align}
			\nonumber
			\Delta(\A) \ell_\I(\A)
			={}
			&\mu(\K(\A \cap \I))
			\lambda(\I(\A \cap \K)) \pi(\A) \\
			\nonumber
			&+ \mu(\A \cap \K) \sum_{i \in \A \cap \I}
			\lambda_i \ell_\I(\A {\setminus} \{i\})
			+ \lambda(\A \cap \I)
			\sum_{k \in \A \cap \K}
			\mu_k \ell_\I(\A {\setminus} \{k\}) \\
			\label{eq:LIpiAB}
			&+ \sum_{i \in \A \cap \I} \sum_{k \in \A \cap \K}
			\lambda_i \mu_k
			\ell_\I(\A {\setminus} \{i, k\}).
		\end{align}
		for each
		$\A \in \ind$,
		with the base case
		$\ell_\I(\emptyset) = 0$
		and the convention that
		$\ell_\I(\A) = 0$
		for each $\A \notin \inde$.
	\end{proposition}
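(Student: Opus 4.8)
The plan is to mimic the proof of Proposition~\ref{prop:piAB}, the only genuinely new ingredient being a decomposition of the ``length'' variable. Since the matching policy keeps the two queues of equal length, the number of unmatched customers in a state $(c,d)\in\Pi$ of length $n$ is exactly $n$, so I would set $\ell_\I(\A)=\sum_{(c,d)\in\Pi_\A} n\,\pi(c,d)$ for each $\A\in\inde$. By the law of total expectation $L_\I=\sum_{\A\in\inde}\ell_\I(\A)$, and $\ell_\I(\A)/\pi(\A)$ is the claimed conditional mean. The base case $\ell_\I(\emptyset)=0$ holds because $\Pi_\emptyset=\{\varnothing\}$ has length $0$, and the convention $\ell_\I(\A)=0$ for $\A\notin\inde$ is consistent with $\pi(\A)=0$ in that case.

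First I would substitute~\eqref{eq:picd} into the definition of $\ell_\I(\A)$ and factor out the last customer--server pair $(c_n,d_n)=(i,k)$, exactly as in the proof of Proposition~\ref{prop:piAB}, to obtain
\begin{align*}
  \mu(\K(\A\cap\I))\,\lambda(\I(\A\cap\K))\,\ell_\I(\A)
  = \sum_{(c,d)\in\Pi_\A}\lambda_{c_n}\mu_{d_n}\, n\,
    \pi\big((c_1,\ldots,c_{n-1}),(d_1,\ldots,d_{n-1})\big).
\end{align*}
The key step is then to write $n=(n-1)+1$ and treat the two resulting sums separately. For the $(n-1)$ part I would apply the partition~\eqref{eq:recurrence} and the change of variable used for Proposition~\ref{prop:piAB}; since $n-1$ is precisely the length of the prefix state, summing $n-1$ against $\pi(\cdot)$ over each block $\Pi_\B$ of that partition returns $\ell_\I(\B)$ instead of $\pi(\B)$. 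This reproduces — verbatim, with $\pi$ replaced by $\ell_\I$ — the right-hand side of~\eqref{eq:piAB-proof}, i.e.\ the sum over $i\in\A\cap\I$, $k\in\A\cap\K$ of $\lambda_i\mu_k\big(\ell_\I(\A)+\ell_\I(\A{\setminus}\{i\})+\ell_\I(\A{\setminus}\{k\})+\ell_\I(\A{\setminus}\{i,k\})\big)$, whose leading term collapses to $\lambda(\A\cap\I)\mu(\A\cap\K)\ell_\I(\A)$ and whose remaining terms are exactly the three convolution-type sums of~\eqref{eq:LIpiAB}. For the ``$+1$'' part, the sum $\sum_{(c,d)\in\Pi_\A}\lambda_{c_n}\mu_{d_n}\,\pi((c_1,\ldots,c_{n-1}),(d_1,\ldots,d_{n-1}))$ is precisely the quantity that the proof of Proposition~\ref{prop:piAB} identifies with $\mu(\K(\A\cap\I))\lambda(\I(\A\cap\K))\pi(\A)$, so it contributes the leading term of~\eqref{eq:LIpiAB} directly.

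Collecting the two parts gives $\mu(\K(\A\cap\I))\lambda(\I(\A\cap\K))\ell_\I(\A)$ equal to $\mu(\K(\A\cap\I))\lambda(\I(\A\cap\K))\pi(\A)+\lambda(\A\cap\I)\mu(\A\cap\K)\ell_\I(\A)$ plus the three convolution terms; moving $\lambda(\A\cap\I)\mu(\A\cap\K)\ell_\I(\A)$ to the left and recalling $\Delta(\A)$ from~\eqref{eq:delta} yields~\eqref{eq:LIpiAB}. Since $\Delta(\A)>0$ for every $\A\in\ind$ under the stability conditions, the recursion determines $\ell_\I(\A)$ uniquely from the values of $\ell_\I$ and $\pi$ on strict subsets of $\A$, so the family $(\ell_\I(\A))_{\A\in\inde}$ and hence $L_\I$ are well defined.

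I expect the only delicate points to be bookkeeping: handling the four blocks in~\eqref{eq:recurrence} with the $\pi(\cdot)=0$ and $\ell_\I(\cdot)=0$ conventions when a removed class empties one side of $\A$, and verifying that the ``$+1$'' sum genuinely coincides with the identity extracted from the proof of Proposition~\ref{prop:piAB} rather than merely matching it up to lower-order terms. The argument for Proposition~\ref{prop:LipiAB} follows the same scheme but is heavier, because there one must track states whose prefix no longer contains the tagged class $i$; this forces the recursion index to range more broadly and produces the extra $\pi(\A)+\pi(\A{\setminus}\{i\})$ and $\pi(\A{\setminus}\{k\})+\pi(\A{\setminus}\{i,k\})$ terms that are absent from the all-classes recursion~\eqref{eq:LIpiAB}.
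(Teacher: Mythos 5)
Your argument is correct, but it takes a genuinely different route from the paper. The paper proves Proposition~\ref{prop:LIpiAB} in one line: it sums the per-class recursion~\eqref{eq:LipiAB} of Proposition~\ref{prop:LipiAB} over all $i \in \A \cap \I$ and observes that the aggregate of the ``boundary'' terms $\lambda_i \mu_k \big(\pi(\A) + \pi(\A{\setminus}\{i\}) + \pi(\A{\setminus}\{k\}) + \pi(\A{\setminus}\{i,k\})\big)$ collapses, via the identity~\eqref{eq:piAB-proof}, into the single leading term $\mu(\K(\A \cap \I))\lambda(\I(\A \cap \K))\pi(\A)$. You instead rederive the recursion from scratch: you weight each state of $\Pi_\A$ by its length $n$, peel off the last customer--server pair using~\eqref{eq:recurrence}, and split $n = (n-1) + 1$ so that the $(n-1)$ part reproduces the structure of~\eqref{eq:piAB-proof} with $\ell_\I$ in place of $\pi$ while the $+1$ part is exactly the quantity already identified with $\mu(\K(\A\cap\I))\lambda(\I(\A\cap\K))\pi(\A)$ in the proof of Proposition~\ref{prop:piAB}. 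Both computations check out (in particular, the prefix in the block decomposition has length $n-1$, so summing $n-1$ against $\pi$ over a block $\Pi_\B$ does return $\ell_\I(\B)$, and the four $\pi$-terms do recombine via~\eqref{eq:piAB-proof}). What your route buys is self-containment and simplicity: Proposition~\ref{prop:LIpiAB} no longer depends on the heavier class-tagged bookkeeping of Proposition~\ref{prop:LipiAB}, since the weight $n$ is a function of the state length alone and never forces you to track which block of the partition contains the tagged class. What the paper's route buys is economy: having already paid for Proposition~\ref{prop:LipiAB}, the aggregate result is a two-line corollary. Your closing remark about why the per-class recursion carries the extra $\pi(\A) + \pi(\A{\setminus}\{i\})$ and $\pi(\A{\setminus}\{k\}) + \pi(\A{\setminus}\{i,k\})$ terms is also accurate.
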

	
	By Little's law,
	the mean waiting time
	of class-$i$ customers is
	$L_i / \mu_i$,
	for each $i \in \I$,
	and the mean waiting time
	of customers (all classes included) is $L$.
	By following the same approach as
	\cite[Propositions~9 and~10]{C21},
	we can derive, for each class,
	closed-form expressions
	for the distribution transforms
	of the number of unmatched items
	and waiting time.
	In the interest of space,
	and to avoid complicated notation,
	these results are omitted.

	\subsection{Time complexity and related work} \label{subsec:discussion}
	
	To conclude Section~\ref{sec:performance},
	we briefly discuss the merit
	of our approach compared to
	the expression derived
	in~\cite[Theorem~3]{AW12}
	and rederived in~\cite[Theorem~7]{ABMW17}
	for the normalization constant
	(equal to the inverse of
	the probability that the system is empty).
	This approach relies on a Markov chain
	called the \emph{server-by-server FCFS
		augmented matching process}
	in \cite[Section~5.4]{ABMW17}.
	
	\paragraph{Flexibility}
	
	The first merit of our approach
	is that it can be almost straightforwardly applied
	to derive other relevant performance metrics.
	Sections~\ref{subsec:waiting-probability}
	and~\ref{subsec:waiting-time}
	provide two examples:
	the expression of the waiting probability
	is a side-result of
	Proposition~\ref{prop:piAB},
	while the mean waiting time
	follows by a derivation
	along the same lines.
	Performance metrics
	that can be calculated in a similar fashion include
	the variance of the stationary number
	of unmatched items of each class,
	the mean length of a busy sequence, and
	the fractions of transitions of types
	\ref{minus/minus},
	\ref{pm/equal},
	\ref{equal/pm},
	\ref{equal/equal}, and
	\ref{plus/plus}.
	Our approach may also be adapted
	to derive an alternative expression
	for the matching rates
	calculated in \cite[Section~3]{AW12}.
	Indeed, upon applying the PASTA property,
	it suffices to calculate the stationary distribution
	of the \emph{order} of first occurrence
	of unmatched classes in the queues
	(rather than just the \emph{set}
	of unmatched classes);
	this distribution can be evaluated
	by considering a refinement of the partition
	introduced in Section~\ref{subsec:partition}.
	
	\paragraph{Time complexity}
	
	Compared to the formula
	of~\cite[Theorem~3]{AW12},
	our method leads to a lower time complexity
	if the number of independent sets
	in the compatibility graph is smaller
	than the cardinalities of the power sets
	of the sets $\I$ and $\K$.
	This is the case, for instance, in $d$-regular graphs,
	where the number of independent sets
	is at most $(2^{d+1} - 1)^{(I+K)/2d}$~\cite{Z10}.
	To illustrate this, let us first recall
	how to compute the probability
	that the system is empty
	using Proposition~\ref{prop:piAB}.
	The idea is to first
	apply~\eqref{eq:piAB} recursively
	with the base case $\pi(\emptyset) = 1$,
	and then derive the value of $\pi(\emptyset)$
	by applying the normalization equation.
	For each $\A \in \ind$,
	assuming that the values of
	$\pi(\A {\setminus} \{i\})$,
	$\pi(\A {\setminus} \{k\})$, and
	$\pi(\A {\setminus} \{i,k\})$
	are known
	for each $i \in \A \cap \I$
	and $k \in \A \cap \K$,
	evaluating $\pi(\A)$
	using~\eqref{eq:piAB}
	requires $O(I \cdot K)$
	operations,
	where $I$ is the number of customer classes
	and $K$ is the number of server classes.
	The time complexity
	to evaluate the probability that the system is empty
	is therefore given by
	$O(T + N \cdot I \cdot K)$,
	where $N$ is
	the number of independent sets
	in the compatibility graph
	and $T$ is the time complexity
	to enumerate all maximal independent sets.
	The result of~\cite{T77} implies
	that the time complexity to enumerate
	all maximal independent sets
	in the (bipartite) compatibility graph
	$O((I + K) \cdot I \cdot K \cdot M)$,
	where $M$ is the number
	of maximal independent sets.
	Overall, the time complexity to evaluate
	the normalization constant
	using Proposition~\ref{prop:piAB} is
	$O(I \cdot K \cdot ((I + K) \cdot M + N))$.
	
	In comparison,
	the time complexity to evaluate
	the normalization constant
	using \cite[Theorem~3]{AW12}
	is $O((I + K) \cdot 2^{\min(I, K)})$
	if we implement these formulas recursively,
	in a similar way as in~\cite{BCM17}.
	Our method thus leads
	to a lower time complexity
	if the number of independent sets
	of the compatibility graph is small.

	\section{Numerical evaluation} \label{sec:numerical}
		
	\begin{figure}[b!]
		\centering
		\subfloat[Customer-oriented performance metrics \label{fig:customers}]{
			\begin{tabular}[b]{c}
				\begin{tikzpicture}[every node/.style={scale=.8}]
					\begin{axis}[probaplotstyle, hide axis]
						\addlegendimage{teal, no markers}
						\addlegendentry{Average};
						
						\addlegendimage{orange, densely dashed, no markers}
						\addlegendentry{Class~1};
						
						\addlegendimage{green, densely dashdotted, no markers}
						\addlegendentry{Class~2};
						
						\addlegendimage{red, densely dotted, no markers,}
						\addlegendentry{Class~3};
						
						\addlegendimage{purple, dashed, no markers}
						\addlegendentry{Class~4};
					\end{axis}
				\end{tikzpicture}
				\\
				\pgfplotstableread{Results_model_waiting_probability_customers.csv}\model
				\pgfplotstableread{Results_simulation_waiting_probability_customers.csv}\simulation
				\begin{tikzpicture}[every node/.style={scale=.8}]
					\begin{axis}[probaplotstyle]
						\addplot+[
						teal, no markers,
						] table[x=parameter, y=5]{\model};
						
						\addplot+[teal, only marks, mark=x]
						table[x=parameter, y=5]{\simulation};
						
						\addplot+[
						orange, densely dashed, no markers,
						] table[x=parameter, y=1]{\model};
						
						\addplot+[orange, only marks, mark=x]
						table[x=parameter, y=1]{\simulation};
						
						\addplot+[
						green, densely dashdotted, no markers,
						] table[x=parameter, y=2]{\model};
						
						\addplot+[green, only marks, mark=x]
						table[x=parameter, y=2]{\simulation};
						
						\addplot+[
						red, densely dotted, no markers,
						] table[x=parameter, y=3]{\model};
						
						\addplot+[red, only marks, mark=x]
						table[x=parameter, y=3]{\simulation};
						
						\addplot+[
						purple, dashed, no markers,
						] table[x=parameter, y=4]{\model};
						
						\addplot+[purple, only marks, mark=x]
						table[x=parameter, y=4]{\simulation};
					\end{axis}
				\end{tikzpicture}
				\pgfplotstableread{Results_model_waiting_time_customers.csv}\model
				\pgfplotstableread{Results_simulation_waiting_time_customers.csv}\simulation
				\begin{tikzpicture}[every node/.style={scale=.8}]
					\begin{axis}[meanplotstyle]
						\addplot+[
						teal, no markers,
						] table[x=parameter, y=5]{\model};
						
						\addplot+[teal, only marks, mark=x]
						table[x=parameter, y=5]{\simulation};
						
						\addplot+[
						orange, densely dashed, no markers,
						] table[x=parameter, y=1]{\model};
						
						\addplot+[orange, only marks, mark=x]
						table[x=parameter, y=1]{\simulation};
						
						\addplot+[
						green, densely dashdotted, no markers,
						] table[x=parameter, y=2]{\model};
						
						\addplot+[green, only marks, mark=x]
						table[x=parameter, y=2]{\simulation};
						
						\addplot+[
						red, densely dotted, no markers,
						] table[x=parameter, y=3]{\model};
						
						\addplot+[red, only marks, mark=x]
						table[x=parameter, y=3]{\simulation};
						
						\addplot+[
						purple, dashed, no markers,
						] table[x=parameter, y=4]{\model};
						
						\addplot+[purple, only marks, mark=x]
						table[x=parameter, y=4]{\simulation};
					\end{axis}
				\end{tikzpicture}
			\end{tabular}
		}
		\hfill
		\subfloat[Server-oriented performance metrics \label{fig:servers}]{
			\begin{tabular}[b]{c}
				\begin{tikzpicture}[every node/.style={scale=.8}]
					\begin{axis}[probaplotstyle, hide axis]
						\addlegendimage{teal, no markers}
						\addlegendentry{Average};
						
						\addlegendimage{orange, densely dashed, no markers}
						\addlegendentry{Class~A};
						
						\addlegendimage{green, densely dashdotted, no markers}
						\addlegendentry{Class~B};
						
						\addlegendimage{red, densely dotted, no markers,}
						\addlegendentry{Class~C};
						
						\addlegendimage{purple, dashed, no markers}
						\addlegendentry{Class~D};
						
						\addlegendimage{olive, dashdotted, no markers}
						\addlegendentry{Class~E};
					\end{axis}
				\end{tikzpicture}
				\\
				\pgfplotstableread{Results_model_waiting_probability_servers.csv}\model
				\pgfplotstableread{Results_simulation_waiting_probability_servers.csv}\simulation
				\begin{tikzpicture}[every node/.style={scale=.8}]
					\begin{axis}[probaplotstyle]
						\addplot+[
						teal, no markers,
						] table[x=parameter, y=6]{\model};
						
						\addplot+[teal, only marks, mark=x]
						table[x=parameter, y=6]{\simulation};
						
						\addplot+[
						orange, densely dashed, no markers,
						] table[x=parameter, y=1]{\model};
						
						\addplot+[orange, only marks, mark=x]
						table[x=parameter, y=1]{\simulation};
						
						\addplot+[
						green, densely dashdotted, no markers,
						] table[x=parameter, y=2]{\model};
						
						\addplot+[green, only marks, mark=x]
						table[x=parameter, y=2]{\simulation};
						
						\addplot+[
						red, densely dotted, no markers,
						] table[x=parameter, y=3]{\model};
						
						\addplot+[red, only marks, mark=x]
						table[x=parameter, y=3]{\simulation};
						
						\addplot+[
						purple, dashed, no markers,
						] table[x=parameter, y=4]{\model};
						
						\addplot+[purple, only marks, mark=x]
						table[x=parameter, y=4]{\simulation};
						
						\addplot+[
						olive, dashdotted, no markers,
						] table[x=parameter, y=5]{\model};
						
						\addplot+[olive, only marks, mark=x]
						table[x=parameter, y=5]{\simulation};
					\end{axis}
				\end{tikzpicture}
				\pgfplotstableread{Results_model_waiting_time_servers.csv}\model
				\pgfplotstableread{Results_simulation_waiting_time_servers.csv}\simulation
				\begin{tikzpicture}[every node/.style={scale=.8}]
					\begin{axis}[meanplotstyle]
						\addplot+[
						teal, no markers,
						] table[x=parameter, y=6]{\model};
						
						\addplot+[teal, only marks, mark=x]
						table[x=parameter, y=6]{\simulation};
						
						\addplot+[
						orange, densely dashed, no markers,
						] table[x=parameter, y=1]{\model};
						
						\addplot+[orange, only marks, mark=x]
						table[x=parameter, y=1]{\simulation};
						
						\addplot+[
						green, densely dashdotted, no markers,
						] table[x=parameter, y=2]{\model};
						
						\addplot+[green, only marks, mark=x]
						table[x=parameter, y=2]{\simulation};
						
						\addplot+[
						red, densely dotted, no markers,
						] table[x=parameter, y=3]{\model};
						
						\addplot+[red, only marks, mark=x]
						table[x=parameter, y=3]{\simulation};
						
						\addplot+[
						purple, dashed, no markers,
						] table[x=parameter, y=4]{\model};
						
						\addplot+[purple, only marks, mark=x]
						table[x=parameter, y=4]{\simulation};
						
						\addplot+[
						olive, dashdotted, no markers,
						] table[x=parameter, y=5]{\model};
						
						\addplot+[olive, only marks, mark=x]
						table[x=parameter, y=5]{\simulation};
					\end{axis}
				\end{tikzpicture}
			\end{tabular}
		}
		\\
		\subfloat[Transition probabilities \label{fig:transitions}]{
			\pgfplotstableread{Results_model_transition_probabilities.csv}\model
			\pgfplotstableread{Results_simulation_transition_probabilities.csv}\simulation
			\begin{tikzpicture}[every node/.style={scale=.8}]
				\begin{axis}[transitionplotstyle]
					\addplot+[
					teal, no markers,
					] table[x=parameter, y=minus/minus]{\model};
					\addlegendentry{\ref{minus/minus}}
					
					\addplot+[teal, only marks, mark=x, forget plot]
					table[x=parameter, y=minus/minus]{\simulation};
					
					\addplot+[
					orange, densely dashed, no markers,
					] table[x=parameter, y=pm/equal]{\model};
					\addlegendentry{\ref{pm/equal}}
					
					\addplot+[orange, only marks, mark=x, forget plot]
					table[x=parameter, y=pm/equal]{\simulation};
					
					\addplot+[
					green, densely dashdotted, no markers,
					] table[x=parameter, y=equal/pm]{\model};
					\addlegendentry{\ref{equal/pm}}
					
					\addplot+[green, only marks, mark=x, forget plot]
					table[x=parameter, y=equal/pm]{\simulation};
					
					\addplot+[
					red, densely dotted, no markers,
					] table[x=parameter, y=equal/equal]{\model};
					\addlegendentry{\ref{equal/equal}}
					
					\addplot+[red, only marks, mark=x, forget plot]
					table[x=parameter, y=equal/equal]{\simulation};
					
					\addplot+[
					purple, dashed, no markers,
					] table[x=parameter, y=plus/plus]{\model};
					\addlegendentry{\ref{plus/plus}}
					
					\addplot+[purple, only marks, mark=x, forget plot]
					table[x=parameter, y=plus/plus]{\simulation};
				\end{axis}
			\end{tikzpicture}
			\hspace{2.67cm}
		}
		\caption{Numerical results associated
			with the graph of \figurename~\ref{fig:graph}.
			The abscissa is the parameter~$\rho$
			defined in~\eqref{eq:arrival}.}
		\label{fig:num}
	\end{figure}
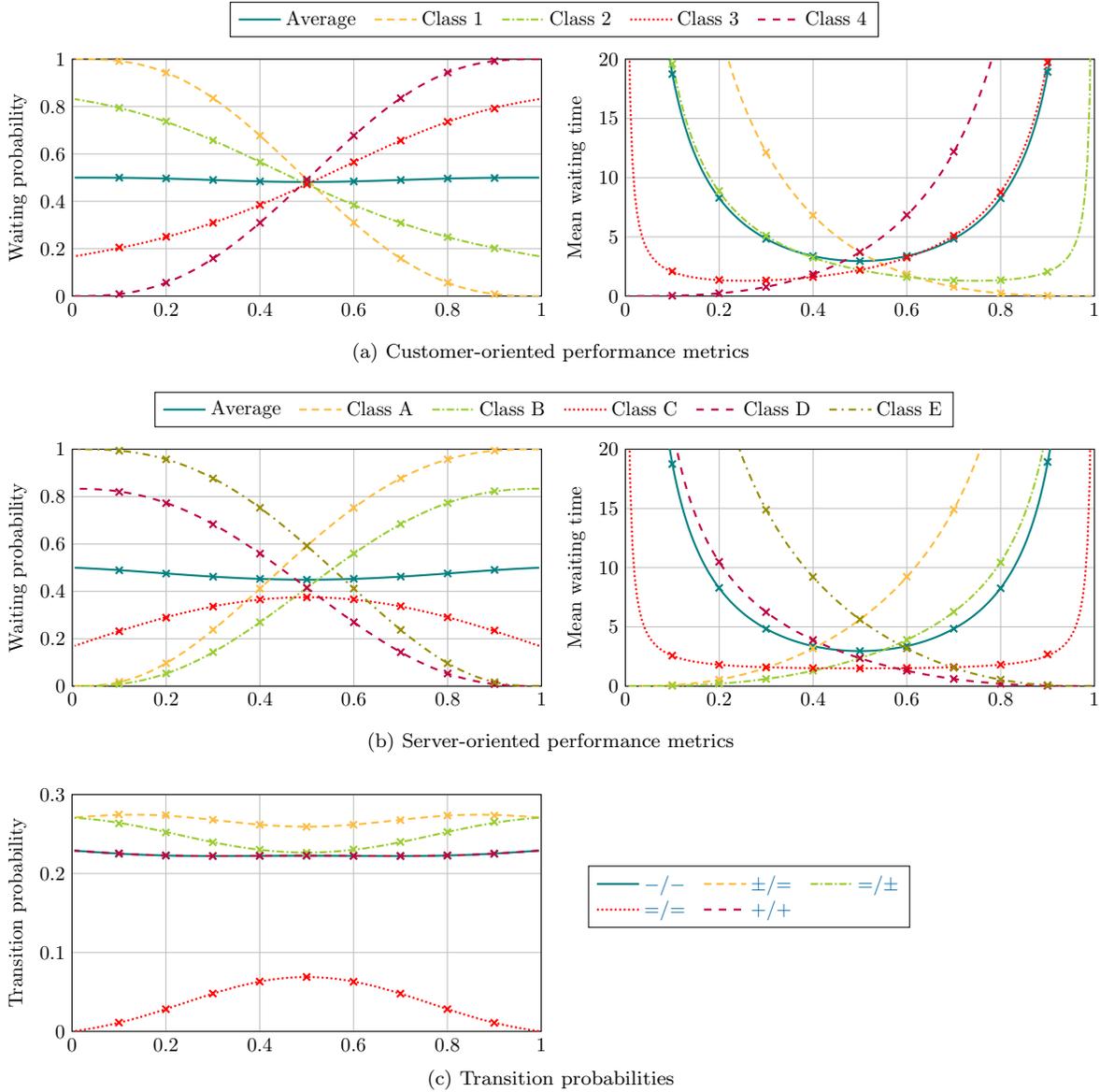

	To illustrate our results,
	we apply the formulas
	of Section~\ref{sec:performance}
	to the toy example
	of \figurename~\ref{fig:graph}.
	The arrival probabilities are chosen as follows:
	for any $\rho \in (0, 1)$,
	\begin{align} \label{eq:arrival}
	\lambda_1 &= \lambda_2 = \lambda_3 = \lambda_4 = \frac14, &
	\mu_A &= \frac\rho4, &
	\mu_B &= \mu_C = \mu_D = \frac14, &
	\mu_E &= \frac{1 - \rho}4.
	\end{align}
	\figurename~\ref{fig:num} shows	several performance metrics. The lines are plotted using the results of Section~\ref{sec:performance}. To verify these results, we plotted marks representing simulated values based on averaging the results of 20 discrete-event simulation runs, each consisting of $10^6$ transitions	after a warm-up period of $10^6$ transitions. The standard deviation of the simulated waiting times (resp.\ probabilities) never exceeded 1.9 (resp.\ 0.008) per experiment, validating the reliability of the results.
	
	Due to the parameter settings,
	performance is symmetrical around $\rho = \frac12$.
	\figurename~\ref{fig:customers}
	and~\ref{fig:servers} show that
	classes $1$, $2$, $3$, $C$, $D$, and $E$
	become unstable,
	in the sense that their mean waiting time
	tends to infinity,
	as $\rho \downarrow 0$.
	This is confirmed by observing that
	$\Delta(\A) \downarrow 0$
	for $\A \in \{ \{4, A\}$, $\{4, A, B\}$,
	$\{4, A, B, C\}$, $\{3, 4, A\}$,
	$\{3, 4, A, B\}$, $\{2, 3, 4, A\} \}$
	when $\rho \downarrow 0$.
	We conjecture that this limiting regime
	can be studied by adapting
	the heavy-traffic analysis
	of \cite[Section~6.2]{C21},
	although the behavior is different
	due to the concurrent arrivals
	of customers and servers.
	
	Even if classes $1$, $2$, $3$, $C$, $D$, and $E$
	all become unstable as $\rho \downarrow 0$,
	we can distinguish two
	qualitatively-different behaviors:
	the waiting probabilities of
	classes~$1$ and~$E$ tend to one,
	while for classes~$2$, $3$, $C$, and $D$
	the limit is strictly less than one.
	This difference lies in the fact that
	the former classes have degree one in the compatibility graph, while the latter have degree two. Especially class~$C$ is intriguing,
	as the monotonicity of its
	waiting probability and mean waiting time
	are reversed,
	and would be worth further investigation.
	
	\figurename~\ref{fig:transitions} shows that
	the probabilities of
	transitions~\ref{minus/minus}
	and~\ref{plus/plus}
	are equal to each other
	(as announced by Corollary~\ref{coro:busy-sequence})
	and are approximately constant.
	The probabilities of transitions~\ref{pm/equal},
	\ref{equal/pm}, and \ref{equal/equal},
	which impact the imbalance between classes
	but not the total queue lengths,
	vary with $\rho$.
	In particular, the probability
	of transitions~\ref{equal/equal}
	is maximal when $\rho = \frac12$,
	which may
	explain why $\rho = \frac12$
	minimizes the average waiting probability
	and mean waiting time.


\begin{thebibliography}{10}
		
		\bibitem{ABMW17}
		Ivo Adan, Ana Busic, Jean Mairesse, and Gideon Weiss.
		\newblock Reversibility and further properties of {FCFS} infinite bipartite
		matching.
		\newblock {\em Math. Oper. Res.}, 43(2):598--621, 2017.
		
		\bibitem{AKRW18}
		Ivo Adan, Igor Kleiner, Rhonda Righter, and Gideon Weiss.
		\newblock {FCFS} parallel service systems and matching models.
		\newblock {\em Perform. Evaluation}, 127-128:253--272, 2018.
		
		\bibitem{AW12}
		Ivo Adan and Gideon Weiss.
		\newblock Exact {FCFS} matching rates for two infinite multitype sequences.
		\newblock {\em Oper. Res.}, 60(2):475--489, 2012.
		
		\bibitem{AW14}
		Ivo Adan and Gideon Weiss.
		\newblock A skill based parallel service system under {FCFS}-{ALIS} — steady
		state, overloads, and abandonments.
		\newblock {\em Stoch. Syst.}, 4(1):250--299, 2014.
		
		\bibitem{BK96}
		S.A. Berezner and A.~E. Krzesinski.
		\newblock Order independent loss queues.
		\newblock {\em Queueing Syst.}, 23(1):331--335, 1996.
		
		\bibitem{BCM17}
		Thomas Bonald, Céline Comte, and Fabien Mathieu.
		\newblock Performance of balanced fairness in resource pools: A recursive
		approach.
		\newblock {\em Proceedings of the {ACM} on Measurement and Analysis of Comput.
			Syst.}, 1(2):41:1--41:25, 2017.
		
		\bibitem{BGM13}
		Ana Busic, Varun Gupta, and Jean Mairesse.
		\newblock Stability of the bipartite matching model.
		\newblock {\em Adv. in Appl. Probab.}, 45(2):351--378, 2013.
		
		\bibitem{CKW09}
		René Caldentey, Edward~H. Kaplan, and Gideon Weiss.
		\newblock {FCFS} infinite bipartite matching of servers and customers.
		\newblock {\em Adv. in Appl. Probab.}, 41(3):695--730, 2009.
		
		\bibitem{C19}
		Céline Comte.
		\newblock Resource management in computer clusters: Algorithm design and
		performance analysis.
		\newblock {\em {Ph.D.} thesis. Institut Polytechnique de Paris.}, 2019.
		
		\bibitem{C21}
		Céline Comte.
		\newblock Stochastic non-bipartite matching models and order-independent loss
		queues.
		\newblock 2021.
		\newblock {T}o appear in Stoch. Models.
		
		\bibitem{D09}
		Manfred Droste, Werner Kuich, and Heiko Vogler, editors.
		\newblock {\em Handbook of Weighted Automata}.
		\newblock Monographs in Theoretical Computer Science. An {EATCS} Series.
		Springer-Verlag, 2009.
		
		\bibitem{GR20}
		Kristen Gardner and Rhonda Righter.
		\newblock Product forms for {FCFS} queueing models with arbitrary server-job
		compatibilities: an overview.
		\newblock {\em Queueing Syst.}, 96(1):3--51, 2020.
		
		\bibitem{G15}
		Kristen Gardner, Samuel Zbarsky, Sherwin Doroudi, Mor Harchol-Balter, and Esa
		Hyytia.
		\newblock Reducing latency via redundant requests: Exact analysis.
		\newblock {\em {ACM} {SIGMETRICS} Perform. Evaluation Review}, 43(1):347--360,
		2015.
		
		\bibitem{K11}
		A.~E. Krzesinski.
		\newblock Order independent queues.
		\newblock In Richard~J. Boucherie and Nico~M. Van~Dijk, editors, {\em Queueing
			networks: A fundamental approach}, number 154 in Internat. Ser. in Ope. Res.
		\& Manag. Sci., pages 85--120. Springer {US}, 2011.
		
		\bibitem{MM16}
		Jean Mairesse and Pascal Moyal.
		\newblock Stability of the stochastic matching model.
		\newblock {\em J. Appl. Probab.}, 53(4):1064--1077, 2016.
		
		\bibitem{MBM21}
		Pascal Moyal, Ana Busic, and Jean Mairesse.
		\newblock A product form for the general stochastic matching model.
		\newblock {\em J. Appl. Probab.}, 58(2):449--468, 2021.
		
		\bibitem{T77}
		Shuji Tsukiyama, Mikio Ide, Hiromu Ariyoshi, and Isao Shirakawa.
		\newblock A new algorithm for generating all the maximal independent sets.
		\newblock {\em {SIAM} J. Comput.}, 6(3):505--517, 1977.
		
		\bibitem{Z10}
		Yufei Zhao.
		\newblock The number of independent sets in a regular graph.
		\newblock {\em Combinatorics, Probability and Computing}, 19(2):315--320, 2010.
		
	\end{thebibliography}

	\appendix

	\section*{Appendix: Proofs of the results
		of Section~\ref{sec:performance}} \label{app:performance}
	
	\def\proofname{Proof of Corollary~\ref{coro:busy-sequence}}
	\begin{proof}
		Summing~\eqref{eq:piAB-proof}
		over all $\A \in \ind$
		and rearranging the sum symbols yields
		\begin{align}
			\label{eq:busy-sequence-1}
			& \sum_{(i, k) \in \I \times \K}
			\lambda_i \mu_k
			\sum_{\substack{
					\A \in \ind:
					i \in \I(\A \cap \K), \\
					k \in \K(\A \cap \I)
			}}
			\pi(\A)
			= \sum_{\substack{
					(i, k) \in \I \times \K: \\
					i \nsim k
			}}
			\lambda_i \mu_k
			\sum_{\substack{
					\A \in \ind: \\
					i \in \A, k \in \A
			}} \big(
			\pi(\A)
			+ \pi(\A {\setminus} \{i\})
			+ \pi(\A {\setminus} \{k\})
			+ \pi(\A {\setminus} \{i, k\})
			\big).
		\end{align}
		The left-hand side of this equation
		is the left-hand side of~\eqref{eq:busy-sequence}.
		The right-hand side can be rewritten
		by making changes of variables.
		For instance,
		for each $i \in \I$ and $k \in \K$
		such that $i \nsim k$,
		replacing $\A$ with $\A {\setminus} \{i, k\}$
		in the last sum yields
		\begin{align*}
			\sum_{\substack{
					\A \in \ind:
					i \in \A, k \in \A
			}} \pi(\A {\setminus} \{i, k\})
			&= \sum_{\substack{
					\A \subseteq \I \cup \K:
					i \notin \A, k \notin \A, \\
					\A \cup \{i, k\} \in \ind
			}} \pi(\A)
			= \sum_{\substack{
					\A \in \inde:
					i \notin \A, k \notin \A, \\
					i \notin \I(\A \cap \K),
					k \notin \K(\A \cap \I)
			}} \pi(\A).
		\end{align*}
		The second equality
		is true only because $i \nsim k$.
		By applying changes of variables
		to the other terms,
		we obtain that the right-hand side
		of~\eqref{eq:busy-sequence-1} is equal to
		\begin{align*}
			&\sum_{\substack{
					(i, k) \in \I \times \K: \\
					i \nsim k
			}}
			\lambda_i \mu_k
			\Bigg(
			\begin{aligned}[t]
				&\sum_{\substack{
						\A \in \inde:
						i \in \I, k \in \A, \\
						i \notin \I(\A \cap \K),
						k \notin \K(\A \cap \I)
				}}
				\pi(\A)
				+ \sum_{\substack{
						\A \in \inde:
						i \notin \I, k \in \A, \\
						i \notin \I(\A \cap \K),
						k \notin \K(\A \cap \I)
				}}
				\pi(\A) \\
				&+ \sum_{\substack{
						\A \in \inde:
						i \in \I, k \notin \A, \\
						i \notin \I(\A \cap \K),
						k \notin \K(\A \cap \I)
				}}
				\pi(\A)
				+ \sum_{\substack{
						\A \in \inde:
						i \notin \A, k \notin \A, \\
						i \notin \I(\A \cap \K),
						k \notin \K(\A \cap \I)
				}} \pi(\A)
				\Bigg)
			\end{aligned} \\
			&= \sum_{\substack{
						(i, k) \in \I \times \K:
						i \nsim k
				}}
				\lambda_i \mu_k
				\sum_{\substack{
						\A \in \inde:
						i \notin \I(\A \cap \K),
						k \notin \K(\A \cap \I)
				}} \pi(\A).
		\end{align*}
	\end{proof}
	
	\def\proofname{Proof of Proposition~\ref{prop:LipiAB}}
	\begin{proof}
		Let $i \in \I$.
		We have $L_i = \sum_{\A \in \inde} \ell_i(\A)$, where
		\begin{align*}
			\ell_i(\A)
			= \sum_{(c, d) \in \Pi_\A}
			|c|_i \pi(c, d),
			\quad \A \in \inde.
		\end{align*}
		Let $\A \in \ind$.
		If $i \notin \A$, we have directly
		$\ell_i(\A) = 0$
		because $|c|_i = 0$
		for each $(c, d) \in \Pi_\A$.
		Now assume that $i \in \A$
		(so that in particular $\A$ is non-empty).
		The method is similar to
		the proof of Proposition~\ref{prop:piAB}.
		First, by applying~\eqref{eq:picd}, we have
		\begin{align*}
			\ell_i(\A)
			= \sum_{(c, d) \in \Pi_\A}
			& |c|_i
				\frac{\lambda_{c_n}}
				{\mu(\K(\A \cap \I))}
				\frac{\mu_{d_n}}
				{\lambda(\I(\A \cap \K))}
				\pi((c_1, \ldots, c_{n-1}),
				(d_1, \ldots, d_{n-1})).
		\end{align*}
		Then applying~\eqref{eq:recurrence}
		and doing a change of variable yields
		\begin{align*}
			&\mu(\K(\A \cap \I)) \lambda(\I(\A \cap \K))
			\ell_i(\A) \\
			&=
			\begin{aligned}[t]
				&\lambda_i \sum_{k \in \A \cap \K} \mu_k
				\begin{aligned}[t]
					\bigg(
					&\sum_{(c, d) \in \Pi_\A}
					(|c|_i + 1) \pi(c, d)
					+ \sum_{(c, d) \in \Pi_{\A {\setminus} \{i\}}}
					(0 + 1) \pi(c, d) \\
					&+ \sum_{(c, d) \in \Pi_{\A {\setminus} \{k\}}}
					(|c|_i + 1) \pi(c, d)
					+ \sum_{(c, d) \in \Pi_{\A {\setminus} \{i, k\}}}
					(0 + 1) \pi(c, d)
					\bigg),
				\end{aligned} \\
				&+ \sum_{j \in (\A {\setminus} \{i\}) \cap \I}
				\sum_{k \in \A \cap \K}
				\lambda_j \mu_k
				\begin{aligned}[t]
					\bigg(
					&\sum_{(c, d) \in \Pi_\A}
					|c|_i \pi(c, d)
					+ \sum_{(c, d) \in \Pi_{\A {\setminus} \{j\}}}
					|c|_i \pi(c, d) \\
					&+ \sum_{(c, d) \in \Pi_{\A {\setminus} \{k\}}}
					|c|_i \pi(c, d)
					+ \sum_{(c, d) \in
						\Pi_{\A {\setminus} \{j, k\}}}
					|c|_i \pi(c, d)
					\bigg),
				\end{aligned}
			\end{aligned} \\
			&=
			\begin{aligned}[t]
				&\lambda_i \sum_{k \in \A \cap \K} \mu_k \left(
				\ell_i(\A)
				+ \pi(\A)
				+ \pi(\A {\setminus} \{i\})
				+ \ell_i(\A {\setminus} \{k\})
				+ \pi(\A {\setminus} \{k\})
				+ \pi(\A {\setminus} \{i, k\})
				\right) \\
				&+ \sum_{j \in (\A {\setminus} \{i\}) \cap \I}
				\sum_{k \in \A \cap \K}
				\lambda_j \mu_k \left(
				\ell_i(\A)
				+ \ell_i(\A {\setminus} \{j\})
				+ \ell_i(\A {\setminus} \{k\})
				+ \ell_i(\A {\setminus} \{j, k\})
				\right).
			\end{aligned}
		\end{align*}
		The result follows by rearranging the terms.
	\end{proof}
	
	\def\proofname{Proof of Proposition~\ref{prop:LIpiAB}}
	\begin{proof}
		Equation~\eqref{eq:LIpiAB} follows by
		summing~\eqref{eq:LipiAB} over all $i \in \I \cap \A$
		and simplifying the result
		using~\eqref{eq:piAB-proof}.
	\end{proof}
	
\end{document}